\documentclass[a4paper]{amsart} 
\usepackage{amssymb,times, amscd,amsmath,amsthm, xypic}
\usepackage{tikz}
\usepackage{tikz-cd}
\usepackage[all]{xy}
\usepackage{geometry}
\usepackage{mathrsfs}
\makeatletter
\global\let\tikz@ensure@dollar@catcode=\relax
\makeatother
\usepackage{color}
\newcommand{\Z}{\mathbb{Z}}

\newcommand{\E}{\mathscr{E}}

\numberwithin{equation}{section}
\newtheorem{thm}[equation]{Theorem}
\newtheorem{prop}[equation]{Proposition}

\newtheorem{cor}[equation]{Corollary}

\newtheorem{lem}[equation]{Lemma}
\theoremstyle{definition}

\newtheorem{defi}[equation]{Definition}

\newtheorem{rem}[equation]{Remark}

\def\op{\operatorname}

\begin{document}
\title[The group of homotopy self-equivalences is a Lax functor]{The group of homotopy self-equivalences\\ is a Lax functor} 
\author{Toshihiro Yamaguchi and Shoji Yokura}

\thanks {
\noindent
\emph{keywords} : homotopy self-equivalence, correspondence, 2-category, Lax functor \\
\emph{Mathematics Subject Classification 2000}: 55P10, 18N10, 18D60; 55Q05}

\address{Faculty of Education, Kochi University, 2-5-1,Kochi, 780-8520, Japan} 
\email{tyamag@kochi-u.ac.jp}

\address{Graduate School of Science and Engineering, Kagoshima University, 1-21-35 Korimoto, Kagoshima, 890-0065, Japan
}

\email{yokura@sci.kagoshima-u.ac.jp}

\maketitle

\begin{abstract}
The group $\E(X)$ of homotopy self-equivalences of a topological space $X$ is a well-known group in homotopy theory and has been studied by many people since it was first introduced in the late 1950s. $\E$ is not a functor in the usual sense. In this paper we show that $\E$ is a Lax functor from the category $\mathscr Top$ of topological spaces to a strict $2$-category $\op{Corr}_{\mathscr Gr}$ of \emph{correspondences} of groups.
\end{abstract}
\section{Introduction}
An important topic in topology and geometry is to find a topological invariant of a space, i.e., a quantity or property which is not changed by a homeomorphism. To be more precise, let $\mathscr I$ be such an invariant, which means that if $f: X \cong Y$ is a homeomorphism, then $\mathscr I(X) = \mathscr I(Y)$, where $=$ is the equality if $\mathscr I$ is a property or an isomorphism $\cong$ if $\mathscr I$ is some algebraic object such as a group, a ring or module etc. More coarsely, we look for a homotopical invariant, i.e., if $f: X \sim Y$ is a homotopy equivalence, 
then $\mathscr I(X) = \mathscr I(Y)$. The most important, simple and fundamental invariant is Euler--Poincar\'e characteristic $\chi(X)$, which is an integer. In a sense this invariant is a generalization of  counting (e.g., see \cite{Baez}). This invariant has been used in many areas, even in physics and mathematical physics. 

The Euler--Poincar\'e characteristic is ``categorified" by the homology or cohomology theory (e.g., see \cite{LS}), which is the fundamental and important topological and furthermore homotopical invariant. The homology theory $H_*$ is a \emph{covariant functor} from the category $\mathscr Top$ of topological spaces to the category $\mathscr Ab$ of abelian groups and the cohomology theory $H^*$ is a \emph{contravariant functor} from $\mathscr Top$ to $\mathscr Ab$. Another important functor, considered as a nice partner to the homology and cohomology theory, is the homotopy groups $\pi_i(X)$ for $i =1, 2, 3, \cdots$. This group is a covariant functor and is defined in a way much easier than the definition of homology and cohomology groups, but it is much harder than the (co)homology groups to compute. For example, we do not know a general structure of the general homotopy groups $\pi_i(S^n)$ of sphere $S^n$, although we \emph{do} know the rational homotopy groups $\pi_i(S^n) \otimes \mathbb Q$ very well by the fundamental work of Jean-Pierre Serre \cite{Serre}:
$$
\pi_i(S^n) \otimes \mathbb Q= \begin{cases}  \mathbb Q, \quad i =n, & \text{if $n$ is an odd integer,}\\
 \mathbb Q, \quad i =n, 2n-1 & \text{if $n$ is an even integer}.
\end{cases} 
$$
The above homotopy group $\pi_i(X)$ is the same as $[S^i, X]_*$\footnote{If you switch $S^i$ and $X$, then $\pi^i(X):=[X, S^i]_*$ is called the \emph{cohomotopy group} and is a contravariant functor.} which is the group of homotopy classes of continuous pointed maps, i.e., continuous map preserving base points. For arbitrary spaces $X$ and $Y$ we can also consider the set $[X, Y]$ of homotopy classes of continuous maps from $X$ to $Y$ without requiring preserving base points. This set $[-,-]$ is a \emph{bifunctor} from the category of topological spaces to the category of sets in the sense that if the source space $X$ is fixed, the assignment $[X, -]$ is a \emph{covariant functor} and if the target space $Y$ is fixed, then $[-,Y]$ is a \emph{contravariant functor}. Thus it is called a \underline{bi}functor. The topic which we discuss in this paper is concerning $[X, X]$, which becomes \emph{the monoid under the composition of maps}. The subset of this monoid $[X,X]$ consisting of invertible elements or units is a group, which is not commutative. This group is denoted by $\E(X)$, called \emph{the group (of homotopy classes) of homotopy self-equivalences of $X$}. $\E(X)$ is clearly a topological and homotopical invariant of topological spaces. This group was for the first time introduced by W.D. Barcus and M.G. Barratt \cite[\S 6]{BB} in 1958 (when it was denoted by $Eq(X)$) and has been studied by many people (e.g., see \cite{Ark, AC, AL, B, B2, BS, CMR, CV, Fel, Kahn, Kahn2, Lee, LY, MR, Pi, Rut, Ru} etc.) Note that we can consider $[X,Y]_*$ with requiring preserving base points, in which case $[-,-]_*$ is a bifunctor from the category of pointed topological spaces and continuous pointed maps to the category of sets. In particular, we can consider the unit group of $[X,X]_*$, which is denoted by $\E(X)_*$. For example it is well-known that $\E(S^n)=\E(S^n)_* = \{[\op{id}_{S^n}], -[\op{id}_{S^n}]\}$ for any $n$, where $-[\op{id}_{S^n}]$ is the homotopy class of any reflection across a hyperplane.

Although $\E(X)$ seems to be homotopy theoretically very simple and quite natural, unlike the (co)homology groups and the homotopy groups, $\E$ \emph{is not a functor} from $\mathscr Top$ to the category $\mathscr Gr$ of groups (not necessarily abelian groups), therefore it is not easy to compute or deal with $\E(X)$. If $f: X \to Y$ is a homotopy equivalence, we can consider the pushforward $\E_*(f):\E(X) \to \E(Y)$ by $\E_*(f)([h_X]):=[f][h_X][f]^{-1}$ for $[h_X] \in \E(X)$. Here $[f]^{-1} := [g]$ for $g:Y \to X$ such that $g \circ f \sim \op{id}_X$ and $f \circ g \sim \op{id}_Y$. Similarly we can define the pullback $\E^*(f):\E(Y) \to \E(X)$ by $\E^*(f)([h_Y]):=[f]^{-1}[h_Y][f]$ for $[h_Y] \in \E(Y)$. But for a general map $f:X \to Y$ one cannot come up with such a pushforward or pullback.  Note that for a homotopy equivalence both maps $\E_*(f)$ and $\E^*(f)$ are isomorphisms.

In this paper we show that $\E$ can be captured as a ``functor", not a (covariant) functor in the usual sense, i.e., not $\E(g \circ f)=\E(g) \circ \E(f)$, but as \emph{a Lax functor}, i.e., 
\begin{equation}\label{lax}
\E(g) \copyright \E(f) \Longrightarrow \E(g \circ f)
\end{equation}
A bit more precise, we define the group $\E(f)$ of homotopy classes of homotopy self-equivalences of a map $f:X \to Y$ (see Definition \ref{E(f)}).
\emph{Our contribution or new perspective} to the study of the group $\E(X)$ is 
to \emph{consider $\E(f)$ as what is so called ``a correspondence" of groups}
\begin{equation}\label{corr-group}
 \E(X) \xleftarrow {p_1} \E(f) \xrightarrow {p_2} \E(Y),
 \end{equation}
\emph{where $\E(f)$ is a subgroup of the product $\E(X) \times \E(Y)$ and $p_1: \E(f) \to \E(X)$ and $p_2: \E(f) \to \E(Y)$  are the restriction of the projections $\pi_1:\E(X) \times \E(Y) \to \E(X)$ and $\pi_2:\E(X) \times \E(Y) \to \E(Y)$  to the subgroup $\E(f)$. This correspondence (\ref{corr-group}) is considered as a morphism from $\E(X)$ to $\E(Y)$.}

In general a pair of homomorphisms $G \xleftarrow {p_1} S \xrightarrow {p_2} F$ of group homomorphisms $p_1:S \to F$ and $p_2:S \to G$ of three groups $G, F$ and $S$, where $S$ is not necessarily a subgroup of the product $F \times G$. Such a pair is called \emph{a span or roof of groups}, and usually depicted as follows (like a ``roof"):
\begin{equation*}
\xymatrix@!0{
& S \ar[dl]_{p_1} \ar[dr]^{p_2} &\\
G && F
}
\end{equation*}
The composition of two spans $G \leftarrow S \rightarrow F$ and $F \leftarrow S' \rightarrow K$ is defined by using the fiber product $S' \times_F S$ (note that the order of $S$ and $S'$) as follows:
\begin{equation}\label{f-product}
\xymatrix@!0{
&& S' \times_F S \ar[dl] \ar[dr]  &&& \\
& S \ar[dl] \ar[dr] && S' \ar[dl] \ar[dr]  &\\
G  && F && K
}
\end{equation}
Usually this composition is denoted by $(G \leftarrow S \rightarrow F) \circ (F \leftarrow S' \rightarrow K)$ or simply denoted by $S' \circ S$ or $G \leftarrow S' \circ S \rightarrow K$. However note that this usual compositions of spans or roofs is \emph{not a nice composition} for our approach, because this composition of correspondences of groups is \emph{not necessarily a correspondence of groups}. Namely, even if $G \leftarrow S \rightarrow F$ and $F \leftarrow S' \rightarrow K$ are correspondences, thus $S$ is a subgroup of $G \times F$ and $S'$ is a subgroup of $F \times K$, the result $G \leftarrow S' \circ S \rightarrow K$ is \emph{not a correspondence of groups}, namely $S' \circ S$ is \emph{not a subgroup of $G \times K$} (although it might be the case when it is isomorphic to a subgroup of $G \times K$). To remedy this drawback, we modify the composition by considering ``fiber-duct" (see below), not the usual fiber product. That is why we denote this new composition $S' \copyright S$. This process is depicted as follows, mimicking the above (\ref{f-product}) by taking he fiber product:

\begin{equation*}
\xymatrix@!0{
&& S' \copyright S \ar@{~}[dl] \ar@{~}[dr]  &&& \\
& S \ar[dl] \ar[dr] && S' \ar[dl] \ar[dr]  &\\
G  && F && K
}
\end{equation*}

Given two groups $G$ and $F$, the set $\op{Corr}_{\mathscr Gr}(G, F)$ of all correspondences $G \leftarrow S \rightarrow F$ with $S$ being a subgroup of $G \times F$ becomes a poset (partially ordered set) by the order $\leq$ defined by 
$$(G \leftarrow S \rightarrow F) \leq (G \leftarrow S' \rightarrow F) \Longleftrightarrow S \subset S'.$$
As well-known, a poset $(P, \leq)$ is also considered as a (small) category, called \emph{poset category}, by setting:
\begin{enumerate}
\item $Obj(P) = P$, i.e., the objects of the category $P$ are all the elements of the poset $P$,
\item For two objects $x, y \in P$, the set of the morphisms from $x$ to $y$ is defined by
$$ Hom_P(x, y)= 
\begin{cases}
\{x \rightarrow y\} \, \text{(only one arrow)}, & \quad \text{if $x \leq y$} \\
\quad   \emptyset, & \quad \text{otherwise}
\end{cases}
$$
\end{enumerate}
Hence $\op{Corr}_{\mathscr Gr}(G, F)$ is a poset category, therefore the category $\op{Corr}_{\mathscr Gr}$ whose objects consist of all groups, i.e., $Obj(\op{Corr}_{\mathscr Gr}) = Obj(\mathscr Gr)$, the set $\op{Corr}_{\mathscr Gr}(G, F)$ of ($1$-)morphisms from  $F$ to $G$ are all the group correspondences $G \leftarrow S \rightarrow F$ and for two morphisms $G \leftarrow S \rightarrow F$ and $G \leftarrow S' \rightarrow F$ (which are in $\op{Corr}_{\mathscr Gr}(F, G)$) we have a 2-morphism $(G \leftarrow S \rightarrow F) \Longrightarrow (G \leftarrow S' \rightarrow F)$ if $S \subset S'$, which is depicted as follows:
$$
\xymatrix@!0
 {& S \ar[dl] \ar[dr] \ar@{=>}[dd] & \\
  G & & F \\
  & S' \ar[ul] \ar[ur] &
 }
$$
Hence the above (\ref{lax}) means the following that
\begin{equation*}
\xymatrix@!0{
&& \E(g) \copyright \E(f) \ar@{~}[dl] \ar@{~}[dr]  &&& \\
& \E(f) \ar[dl] \ar[dr] && \E(g) \ar[dl] \ar[dr]  &\\
\E(X)  && \E(Y) && \E(Z)
}
\xymatrix@!0{
& & \E(g \circ f)  \ar[ddll] \ar[ddrr] & \\
& & & & \\
 \E(X) & & & & \E(Z)
} 
\quad \longrightarrow \quad 
\xymatrix@!0
 {& \E(g) \copyright \E(f)  \ar[dl] \ar[dr] \ar@{=>}[dd] & \\
  \E(X)  & & \E(Z) \\
  & \E(g \circ f) \ar[ul] \ar[ur] &
 }
\end{equation*}

The organization of the paper is as follows. In \S 2 we give a quick recall of the definition of $\E(X)$ and in \S 3 we define $\E(f)$ for a continuous map $f:X \to Y$ and discuss some properties of it. In \S 4 we define the composition of correspondences of groups by what we call ``fiber-duct" and we show that by defining such a composition we obtain \emph{a strict $2$-category of correspondences of groups} (Theorem \ref{st-2-cat}) and we show that \emph{the assignment $\E$ is a Lax functor} (Theorem \ref{lax-fun}). Furthermore we discuss some category-theoretical aspects related to the Lax functor $\E$. In the final section \S 5 we discuss pairs of maps $f:X \to Y$ and $g:Y \to Z$ such that the equality $\E(g) \copyright \E(f) = \E(g \circ f)$ holds. For example, we show that the equality $\E(g) \copyright \E(f) = \E(g \circ f)$ holds for the Hopf fibration $S^1 \overset {f} \hookrightarrow S^3 \xrightarrow g S^2$, but not for the other two Hopf fibrations $S^3 \overset {f} \hookrightarrow S^7 \xrightarrow g S^4$ and $S^7 \overset {f} \hookrightarrow S^{15} \xrightarrow g S^8$.
We also show that the equality holds for the loop-path fibration $\Omega X \overset {f} \hookrightarrow PX \xrightarrow g X$ for a path connected space $X$ and the universal principal $G$-bundle $G \overset {f} \hookrightarrow EG \xrightarrow g BG$, constructed by John W. Milnor \cite{Milnor, Milnor2}, where $BG$ is the classifying space of a topological group $G$. A key for these two results is that $PX$ and $EG$ are both contractible spaces. In fact we show that for two continuous maps $f: X \to Y$ and $g:Y \to Z$ such that $Y$ is a contractible space and $Z$ is path connected, the equality $\E(g) \copyright \E(f) = \E(g \circ f)$ holds. Here the composition $X \xrightarrow f Y \xrightarrow g Z$ is not necessarily a fibration.

\section{The group $\E(X)$ of homotopy self-equivalences of a space $X$}
For two topological spaces $X$ and $Y$ a continuous map $f: X \to Y$ is called a homotopy equivalence if there exists a continuous map $f: Y \to X$ such that $g \circ f \simeq \op{id}_X$ and $f \circ g \simeq \op{id}_Y$. In the case when $X =Y$, $f:X \to X$ is called a \emph{homotopy self-equivalence}\footnote{It seems that in most papers studying $\E(X)$ it is called ``self-homotopy equivalence", but in this paper it is called homotopy self-equivalence, following \cite{Ru}.} and the set of all the homotopy classes of such homotopy self-equivalences of $X$ is denoted by 
$\E(X)$. Namely, it is defined as
\begin{equation}\label{e(x)}
\E(X):= \left \{ [f] \in [X, X] \, | \, \exists g:X \to X; g\circ f \simeq \op{id}_X, f \circ g \simeq \op{id}_X \right \}, 
\end{equation}
which is a group by the composition of maps. 

Here we are a bit sloppy. This group usually depends on whether one consider maps of \emph{pointed} spaces or not. For $\E(X)$ we do not consider a base point. If we consider a base point, the above group is denoted by $\E_*(X)$, the subscript $*$ indicating that we are considering a base point.

Here we take a bit closer look at $\E(X)$. For two topological spaces $X$ and $Y$, the set of homotopy classes of continuous maps from $X$ to $Y$ is denoted by $[X, Y]$.
In the case when $X=Y$, this set $[X, X]$ becomes a non-commutative monoid with the usual composition of map $[f][g]:=[f \circ g]$, namely it satisfies
\begin{enumerate}
\item (associativity) $([f][g])[h] = [f]([g][h])$ for $[f], [g], [h] \in [X, X]$,
\item (the unit) $[\op{id}_X][f]=[\op{id}_X \circ f]=[f]$ and $[f][\op{id}_X]=[f \circ \op{id}_X]=[f].$
\end{enumerate}
However in general $[f][g] \not = [g][f]$, thus it is non-commutative.

$\E(X)$ consists of all the elements $[f] \in [X,X]$ which have the homotopical inverse element $[g]$, i.e., 
$$[f][g]=[g][f]=[\op{id}_X],$$
which is the above requirements $g\circ f \simeq \op{id}_X, f \circ g \simeq \op{id}_X$
defining $\E(X)$ as in (\ref{e(x)}).
Therefore $\E(X)$ is nothing but \emph{the unit group\footnote{Given a monoid $\mathscr M$ the unit group $\mathscr M^{\times}$ of $\mathscr M$ consists of all the elements $x$'s which have their inverses $y$, i.e., $xy=yx=e$, thus it is sometimes called the group of  the invertible elements. For examples, the set $\mathbb Z$ of all the integers is a commutative monoid with respect to the multiplication $\times $ and its unit group $\mathbb Z^{\times} = \{1, -1\}$. The set $\mathbb Z_{>0}$ of all the positive integers is a commutative monoid with respect to the multiplication $\times$ and its unit group $(\mathbb Z_{>0})^{\times} =\{1 \}$. }  $[X,X]^{\times}$} of the above monoid $[X, X]$. It is the largest group in $[X, X]$. 
For example, it is well-known that $\E(S^n) = \E(S^n)_* \cong \mathbb Z_2=\{1, -1\}$ (depending on the mapping degree $1$ or $-1$). Indeed, $[S^n, S^n]= \{ n [\op{id}_{S^n}] \, \, | \, \, n \in \mathbb Z \} \cong \mathbb Z$ where for a negative integer $n$,  $n[\op{id}_{S^n}]= |n|[R_{S^n}]$, here $R_{S^n}:S^n \to S^n$ is a (in fact, any)  reflection map of $S^n$ along a hyperplane and the homotopy class $[R_{S^n}]$ is usually denoted by $-[\op{id}_{S^n}]$.  Then the unit group $\E(S^n)$ of the monoid $[S^n, S^n]$ is $\{1, -1 \} =\{[\op{id}_{S^n}], -[\op{id}_{S^n}]\}$, just like the unit group of \emph{the monoid $\mathbb Z$ (with respect to the multiplication, not the addition)} is $\{1, -1\}$. 

\section{The group $\E(f)$ of homotopy self-equivalences of a map $f$}
$\E(X)$ is defined for a topological space $X$. We define $\E(f)$ for a continuous map $f$ as follows:
\begin{defi}\label{E(f)}(cf. \cite{Hilton}, \cite{Hurvitz}, \cite{Lee}, \cite{LY})
For a continuous map $f: X \to Y$ we set
\begin{equation*}
\E(f) := \left \{([h_X], [h_Y]) \in \E(X) \times \E(Y) \, \, | \, \, f \circ h_X \simeq h_Y \circ f \right  \},
\end{equation*}
namely we consider the following homotopy commutative diagram
\begin{equation}\label{e(f)-dia}
\xymatrix
{ X \ar[r]^{h_X} \ar[d]_f & X \ar[d]^f \\
Y \ar[r]_{h_Y} & Y.
}
\end{equation}
$\E(f)$ shall be called \emph{the group of (homotopy classes) of homotopy self-equivalences of a map $f$}.
\end{defi}

Clearly $\E(f)$ becomes a group by considering compositions component-wise, i.e., 
$$([h_1], [g_1]) \cdot ([h_2], [g_2]) :=([h_1][h_2], [g_1][g_2]).$$

\begin{rem}
A homotopy self-equivalence $f:X \to X$ of a space $X$ can be considered as the following homotopy commutative diagram:
\begin{equation*}\label{e(id)-dia}
\xymatrix
{ X \ar[r]^f \ar[d]_{\op{id}_X} & X \ar[d]^{\op{id}_X}  \\
X \ar[r]_f & X.
}
\end{equation*}
Thus we have
$$\E(\op{id}_X) = \Delta (\E(X)) := \left \{([f], [f]) \in \E(X) \times \E(X) \right \} \cong \E(X).$$
\end{rem}

Here is another approach to $\E(f)$. For two topological spaces $X$ and $Y$, we consider the following action of $\E(X) \times \E(Y)$ on the set $[X, Y]$
$$\mathscr {RL}: \Bigl (\E(X) \times \E(Y) \Bigr ) \times [X, Y] \to [X, Y] \qquad \mathscr {RL}(([h_X], [h_Y]), [f]):= [h_Y]^{-1}[f][h_X],$$
which shall be called \emph{a right-left action}\footnote{In singularity theory of maps of germs, right-left action (or, left-right action, respectively) is a well-known tool in classifying such maps (e.g., see \cite{Nishi}). For example, two map germs $f, g:(\mathbb R^n, 0) \to (\mathbb R^m, 0)$ are called $C^{\infty}$ right-left (or, left-right, respectively) equivalent if there exist $C^{\infty}$-diffeomorphisms $h_{\mathbb R^n}:(\mathbb R^n, 0)  \cong (\mathbb R^n,0)$ and $h_{\mathbb R^m}:(\mathbb R^m,0) \cong (\mathbb R^m,0)$ such that $f = h_{\mathbb R^m}^{-1} \circ g \circ h_{\mathbb R^n}$ (or, $g = h_{\mathbb R^m}  \circ f \circ h_{\mathbb R^n}^{-1}$, respectively). Thus the following diagram commutes:$\xymatrix{(\mathbb R^n,0) \ar[r]^{h_{\mathbb R^n}}_{\cong} \ar[d]_f & (\mathbb R^n,0) \ar[d]^g\\
(\mathbb R^m,0) \ar[r]_{h_{\mathbb R^m}}^{\cong} & (\mathbb R^m,0)}$.}, meaning that $[h_X] \in \E(X)$ acts from \emph{the right-hand side} and $[h_Y] \in \E(Y)$ acts from \emph{the left-hand side as its inverse}. Then it is clear that for a continuous map $f:X \to Y$, $\E(f)$ is nothing but \emph{the stabilizer $(\E(X) \times \E(Y))_{[f]} = \op{Stab}_{\E(X) \times \E(Y)}([f])$} of $\E(X) \times \E(Y)$ which fixes $[f]$, i.e.,
$$\E(f) = \{ ([h_X], [h_Y]) \in \E(X) \times \E(Y) \, \, | \, \, \mathscr {RL}(([h_X], [h_Y]), [f]) = [h_Y]^{-1}[f][h_X]= [f]\}.$$
Here note that $[h_Y]^{-1}[f][h_X]= [f]$ is the same as $[f][h_X]= [h_Y][f]$, i.e., (\ref{e(f)-dia}).
\begin{rem}
Let $\mathscr O(f)$ be the orbit of $[f]$ by the right-left action of the group $\E(X) \times \E(Y)$. Then the well-known Orbit-Stabilizer theorem is the following bijection
$$\Bigl (\E(X) \times \E(Y) \Bigr)/\E(f) \cong \mathscr O(f).$$
\end{rem}
By the definition of $\E(f) (\subset \E(X) \times \E(Y))$, in general it cannot be treated as a functor. If it were a functor, say a covariant functor, we should have an ``associated" homomorphism $\widetilde {\E(f)}:\E(X) \to \E(Y)$. On the other hand, this homomorphism $\widetilde {\E(f)}$ gives rise to a subgroup of $\E(X) \times \E(Y)$ by considering its graph $\widetilde {\E(f)}$; i.e.,
$$\Gamma_{\widetilde {\E(f)}}:=\{ ([h_X], \widetilde {\E(f)}([h_X]) ) \, \, | \, \, [h_X] \in \E(X) \}.$$
In \cite{LY} Jin-Ho Lee and Toshihiro Yamaguchi introduces the notion of $\E$ -map and co-$\E$-map as follows:
\begin{defi} A continuous map $f: X \to Y$ is called an $\E$-map if there is a homomorphism $\phi_f:\E(X) \to \E(Y)$ such that 
\begin{equation}\label{e-map}
\Gamma_{\phi_f}= \{ ([g], \phi_f([g])) \, \, | \, \, [g] \in \E(X) \} \subset \E(f).
\end{equation}
Similarly, a continuous map $f: X \to Y$ is called a co-$\E$-map if there is a homomorphism $\psi_f:\E(Y) \to \E(X)$ such that 
\begin{equation}\label{co-e-map}
 \Gamma_{\psi_f}^t = \{ (\psi_f([h]), [h]) \, \, | \, \, [h] \in \E(Y) \} \subset \E(f)\footnote{Here we note that the graph of the map $\psi_f:\E(Y) \to \E(X)$ is by definition $\{([h_Y], \psi_f([h_Y])) \, \, | \, \, [h_Y] \in \E(Y) \} \subset \E(Y) \times \E(X)$. So by transposing (denoted by superscript $t$) it, we get the above (\ref{co-e-map}).}.
\end{equation}
\end{defi}
It is quite natural to modify the above definition by changing the inclusion $\subset$ to the equality $=$ in (\ref{e-map}) and (\ref{co-e-map}), i.e., as follows:
\begin{defi} A continuous map $f: X \to Y$ is called a \emph{strong} $\E$-map if there is a homomorphism $\phi_f:\E(X) \to \E(Y)$ such that 
\begin{equation*}\label{e-map-1}
\Gamma_{\phi_f}= \{ ([h_X], \phi_f([h_X])) \, \, | \, \, [h_X] \in \E(X) \} = \E(f).
\end{equation*}
Similarly, a continuous map $f: X \to Y$ is called a \emph{strong} co-$\E$-map if there is a homomorphism $\psi_f:\E(Y) \to \E(X)$ such that 
\begin{equation*}\label{co-e-map-2}
\Gamma_{\psi_f}^t = \{ (\psi_f([h_Y]), [h_Y]) \, \, | \, \, [h_Y] \in \E(Y) \} = \E(f).
\end{equation*}
\end{defi}

 If $f:X \to Y$ is a strong $\E$-map or a strong co-$\E$-map, then the group $\E(f)$ has to be a graph of a certain homomorphism from $\E(X) \to \E(Y$ or $\E(Y) \to \E(X)$, respectively. In fact, its converse holds:
\begin{prop}\label{prop-e-maps} A continuous map $f:X \to Y$ is a strong  $\E$-map if and only if the projection map $p_1:\E(f) \to \E(X)$ is an isomorphism as groups. Similarly, a continuous map $f:X \to Y$ is a strong co-$\E$-map if and only if the projection map $p_2:\E(f) \to \E(Y)$ is an isomorphism as groups. 
\end{prop}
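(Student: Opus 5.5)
The argument is elementary group theory: a subgroup of a product is the graph of a homomorphism exactly when the first projection restricted to it is bijective. I will treat the strong $\E$-map case in detail and obtain the co-$\E$ case by the symmetric argument with the coordinates interchanged.

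For ($\Rightarrow$), suppose $\phi_f:\E(X)\to\E(Y)$ is a homomorphism with $\Gamma_{\phi_f}=\E(f)$. Then $p_1:\E(f)\to\E(X)$ is the restriction of a projection, so it is a homomorphism. It is surjective, since $([h_X],\phi_f([h_X]))\in\E(f)$ for every $[h_X]\in\E(X)$. It is injective, since every element of $\E(f)$ has the form $([h_X],\phi_f([h_X]))$ and is therefore determined by its first coordinate. Hence $p_1$ is a group isomorphism.

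For ($\Leftarrow$), suppose $p_1:\E(f)\to\E(X)$ is an isomorphism. I will define $\phi_f:=p_2\circ p_1^{-1}:\E(X)\to\E(Y)$, which is a homomorphism as a composite of homomorphisms. Fix $[h_X]\in\E(X)$. The element $p_1^{-1}([h_X])\in\E(f)$ has first coordinate $[h_X]$, so it equals $([h_X],p_2(p_1^{-1}([h_X])))=([h_X],\phi_f([h_X]))$. This gives $\Gamma_{\phi_f}\subset\E(f)$. Conversely, any $([h_X],[h_Y])\in\E(f)$ equals $p_1^{-1}([h_X])$, so $[h_Y]=\phi_f([h_X])$, which gives $\E(f)\subset\Gamma_{\phi_f}$. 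Therefore $\Gamma_{\phi_f}=\E(f)$ and $f$ is a strong $\E$-map.

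The co-$\E$ statement follows by the same reasoning with $p_2$ and $\psi_f:=p_1\circ p_2^{-1}$, using the transposed graph $\Gamma^t_{\psi_f}$. I do not expect a real obstacle. The only point needing care is that "isomorphism" requires $p_1$ to be a homomorphism. This holds because $\E(f)$ carries the componentwise group structure inherited from $\E(X)\times\E(Y)$, so we only need to check bijectivity.
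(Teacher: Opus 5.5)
Your proof is correct and follows the same route as the paper: the forward direction is immediate, and for the converse you define $\phi_f = p_2\circ p_1^{-1}$, exactly as the paper does. Your version is slightly more complete, because you explicitly check $\Gamma_{\phi_f}=\E(f)$, which the paper leaves implicit after verifying that $\phi_f$ is a homomorphism.
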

\begin{proof} For the sake of simplicity, we just write $g$ instead of the class $[g]$. It suffices to show the case of strong $\E$-map. Let $f:X \to Y$ be a strong  $\E$-map, i.e., we have $\Gamma_{\phi_f}= \{ (g, \phi_f(g)) \, \, | \, \, g \in \E(X) \} = \E(f).$ Then it is clear that $p_1:\E(f)=\Gamma_{\phi_f} \to \E(X)$ is an isomorphism as groups. Conversely we have that $p_1:\E(f) \to \E(X)$ is an isomorphism. For $g \in \E(X)$ we define $\phi_f(g):=p_2 (p_1^{-1}(g))$, where $p_2: \E(f) \to \E(Y)$. This gives us a map $\phi_f:\E(X) \to \E(Y)$. Then $\phi_f$ is a group homomorphism. Indeed, let $p_1^{-1}(g_1)=(g_1, h_1)$ and $p_1^{-1}(g_2)=(g_2, h_2)$. Then we have $(g_1, h_1) \circ (g_2, h_2)=(g_1 \circ g_2, h_1 \circ h_2)$.
Hence $\phi_f(g_1 \circ g_2) = h_1 \circ h_2 = \phi_f(g_1) \circ \phi_f(g_2)$, thus $\phi_f$ is a homomorphism.
\end{proof}
As we observed above, since $\E(\op{id}_X) = \Delta (\E(X)) = \left \{([f], [f]) \in \E(X) \times \E(X) \right \}$, 
we have $\E(\op{id}_X) = \Gamma_{\op{id}_{\E(X)}} = \Gamma^t_{\op{id}_{\E(Y)}}$, hence the identity map $\op{id}_X: X \to X$ is both a strong  $\E$-map and a strong co-$\E$-map. 
\begin{lem}\label{ccc}
If $Y$ is path connected, 
then for any constant map $c:X \to Y$ we have
$$\E(c) = \E(X) \times \E(Y).$$
\end{lem}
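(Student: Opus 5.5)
The inclusion $\E(c) \subset \E(X) \times \E(Y)$ holds by Definition \ref{E(f)}, so only the reverse inclusion needs proof. I will take an arbitrary pair $([h_X],[h_Y]) \in \E(X) \times \E(Y)$ and show that $c \circ h_X \simeq h_Y \circ c$. This is exactly the homotopy commutativity of diagram (\ref{e(f)-dia}) with $f=c$.

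Write $c(x)=y_0$ for all $x\in X$. Since $c$ is constant, $c \circ h_X = c$ on the nose, so it is the constant map at $y_0$. The other composite $h_Y \circ c$ is also constant, with value $y_1 := h_Y(y_0)$. Thus both sides are constant maps $X \to Y$, with values $y_0$ and $y_1$ respectively.

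The key step uses path connectedness of $Y$. Choose a path $\gamma:[0,1]\to Y$ with $\gamma(0)=y_0$ and $\gamma(1)=y_1$, and define $H: X\times[0,1]\to Y$ by $H(x,t)=\gamma(t)$. This map is continuous, since it is $\gamma$ composed with the projection to $[0,1]$. It is a homotopy from $c\circ h_X$ to $h_Y\circ c$. Hence $([h_X],[h_Y])\in\E(c)$.

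This relation depends only on the classes $[h_X]$ and $[h_Y]$, so the argument is well defined on the pairs and gives equality of the two sets. There is no real obstacle. The only point to flag is that the maps here are unbased, so a free path suffices. In the pointed setting one would need $h_Y$ to fix the base point, and that is where path connectedness alone would not be the right hypothesis.
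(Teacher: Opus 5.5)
Your proof is correct and is essentially the paper's own argument: the paper also notes that $c\circ h_X$ and $h_Y\circ c$ are the constant maps at $y_0$ and $h_Y(y_0)$, and it joins them by the homotopy $\omega\circ\pi_2$ along a path $\omega$ from $y_0$ to $h_Y(y_0)$, which is exactly your $H(x,t)=\gamma(t)$. Your closing aside about the pointed case is unnecessary and slightly misleading: there a based constant map takes the base-point value, which every based $h_Y$ fixes, so $h_Y\circ c=c=c\circ h_X$ holds on the nose with no connectivity hypothesis at all.
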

\begin{proof} 
Let $c: X \to Y$ be a constant map. Let $[h_X] \in \E(X)$ and $[h_Y] \in \E(Y)$ be arbitrary elements of them respectively and consider the following diagram:
\begin{equation}\label{diagram-c}
\xymatrix{
X \ar[rr]^{h_X} \ar[d]_c && X \ar[d]^c \\
Y \ar[rr]_{h_Y} && Y
}
\end{equation}
Let $c(X) = y_0 \in Y$ and $y_1=h_Y(y_0)$. Since $Y$ is path connected, there is a path $\omega :[0,1] \to Y$ such that $\omega (0)=y_0$ and $\omega(1)=y_1$. Let $\pi_2: X \times [0, 1]$ be the projection to the second factor and let $H:= \omega \circ \pi_2$ be the composition
$$H: X \times [0,1] \xrightarrow {\pi_2} [0,1] \xrightarrow {\omega} Y,$$
 which is clearly a continuous map and $H(x,0)= \left (c \circ h_X \right )(x) = y_0 (\forall x \in X)$ and $H(x, 1)= \left (h_Y \circ c \right )(x)=y_1 (\forall x \in X)$. Therefore $c \circ  h_X$ is homotopic to $h_Y \circ c$, i.e., 
 $$\text{$[c][h_X]=[c \circ h_X] = [h_Y \circ c]=[h_Y][c]$, thus $[h_Y]^{-1}[c][h_X]=[c]$}.$$
Namely the above diagram (\ref{diagram-c}) is a homotopy commutative diagram. Thus $\E(c) = \E(X) \times \E(Y)$. 
\end{proof}
In fact the constant map in the above Lemma \ref{ccc} can be replaced by \emph{any map $f:X \to Y$ homotopic to a constant map}:
\begin{lem}\label{ccc-1} Let $Y$ be path connected. If $f:X \to Y$ is homotopic to a constant map $c:X \to Y$, then we have
$$\E(f) = \E(X) \times \E(Y).$$
\end{lem}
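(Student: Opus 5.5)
The plan is to reduce Lemma \ref{ccc-1} to Lemma \ref{ccc}. The key observation is that $\E(f)$ depends only on the homotopy class $[f]$. This is clear from the description of $\E(f)$ as the stabilizer $\op{Stab}_{\E(X) \times \E(Y)}([f])$ under the right-left action $\mathscr{RL}$. It can also be checked directly from Definition \ref{E(f)}, as follows.

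Fix a homotopy $f \simeq c$ and take an arbitrary pair $([h_X],[h_Y]) \in \E(X)\times\E(Y)$. Precomposition and postcomposition preserve homotopies, so
$$f\circ h_X \simeq c\circ h_X \qquad \text{and} \qquad h_Y\circ c \simeq h_Y \circ f.$$
Since $Y$ is path connected, Lemma \ref{ccc} gives $c\circ h_X \simeq h_Y\circ c$. Chaining these three homotopies and using transitivity of $\simeq$ yields $f\circ h_X \simeq h_Y\circ f$. Hence $([h_X],[h_Y]) \in \E(f)$.

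Because the pair was arbitrary, this shows $\E(X)\times\E(Y)\subset \E(f)$. The reverse inclusion holds by definition, so $\E(f)=\E(X)\times\E(Y)$.

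Equivalently, in stabilizer language: $[f]=[c]$ implies $\E(f)=\op{Stab}([f])=\op{Stab}([c])=\E(c)=\E(X)\times\E(Y)$.

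There is no real obstacle here. The only point needing care is that composing a homotopy $F:X\times[0,1]\to Y$ with $h_X$ on the source, i.e. $F\circ(h_X\times \op{id}_{[0,1]})$, or with $h_Y$ on the target, i.e. $h_Y\circ F$, is again a homotopy. Both facts are standard.
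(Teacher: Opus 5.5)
Your proof is correct and is essentially the paper's argument: the paper also chains $f\circ h_X \simeq c\circ h_X \simeq h_Y\circ c \simeq h_Y\circ f$, using the constant-map case (Lemma~\ref{ccc}) for the middle step. Your stabilizer remark, that $[f]=[c]$ forces $\E(f)$ and $\E(c)$ to be the same stabilizer under the right-left action, is a valid one-line repackaging of the same observation.
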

\begin{proof} Let $[h_X] \in \E(X)$ and $[h_Y] \in \E(Y)$ be arbitrary elements. Then we have
\begin{align*}
f \circ h_X & \simeq c \circ h_X \quad \text{(since $f:X \to Y$ is homotopic to $c:X \to Y$)}\\
& \simeq h_Y \circ c \quad \text{(by the proof of Lemma \ref{ccc})}\\
& \simeq h_Y \circ f \quad \text{(again, since $f:X \to Y$ is homotopic to $c:X \to Y$)}.
\end{align*}
Therefore $([h_X], [h_Y]) \in \E(f)$, thus $\E(f) = \E(X) \times \E(Y).$
\end{proof}
\begin{cor}
\begin{enumerate}
\item If $Y$ is path connected and $\E(Y)$ is not a trivial group, then any constant map $c:X \to Y$ cannot be a strong $\E$-map.
\item If $Y$ is path connected and $\E(X)$ is not a trivial group, then any constant map $c:X \to Y$ cannot be a strong co-$\E$-map.
\end{enumerate}
\end{cor}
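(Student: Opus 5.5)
Both parts follow from Lemma \ref{ccc} together with Proposition \ref{prop-e-maps}, by counting preimages under the projections. Since $Y$ is path connected, Lemma \ref{ccc} gives $\E(c) = \E(X) \times \E(Y)$, and then $p_1$ and $p_2$ are just the full product projections $\pi_1:\E(X)\times\E(Y)\to\E(X)$ and $\pi_2:\E(X)\times\E(Y)\to\E(Y)$.

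For (1), suppose $c$ were a strong $\E$-map. By Proposition \ref{prop-e-maps}, $p_1=\pi_1:\E(X)\times\E(Y)\to\E(X)$ would be an isomorphism, in particular injective. Its kernel is $\{[\op{id}_X]\}\times\E(Y)$. Because $\E(Y)$ is nontrivial, we can pick $[h_Y]\neq[\op{id}_Y]$; then $([\op{id}_X],[h_Y])$ is a nontrivial element of the kernel, a contradiction. Alternatively, one can argue directly from the definition without Proposition \ref{prop-e-maps}. If $\E(c)=\Gamma_{\phi_c}$, then for each $[h_X]$ there is exactly one $[h_Y]$ with $([h_X],[h_Y])\in\E(c)$. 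But both $([\op{id}_X],[\op{id}_Y])$ and $([\op{id}_X],[h_Y])$ lie in $\E(c)$, so this uniqueness fails.

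For (2), the argument is symmetric. If $c$ were a strong co-$\E$-map, then $p_2=\pi_2$ would be injective. However, its kernel $\E(X)\times\{[\op{id}_Y]\}$ contains $([h_X],[\op{id}_Y])$ with $[h_X]\neq[\op{id}_X]$, which exists because $\E(X)$ is nontrivial. This is again a contradiction.

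There is no real obstacle; the only point needing care is that the path connectedness of $Y$ is exactly the hypothesis that lets us invoke Lemma \ref{ccc}. As a remark, the same argument works verbatim for any $f$ homotopic to a constant map, by Lemma \ref{ccc-1}.
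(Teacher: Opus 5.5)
Your proof is correct and follows the paper's approach: apply Lemma \ref{ccc} to get $\E(c)=\E(X)\times\E(Y)$, then observe that this full product cannot be the graph (or transposed graph) of a homomorphism when the other factor is nontrivial. Your argument via Proposition \ref{prop-e-maps}, showing the relevant projection has nontrivial kernel, just spells out the non-injectivity that the paper's one-line ``cannot be a graph'' argument leaves implicit.
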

\begin{proof}
Firstly, since $\E(Y)$ is not trivial, $\E(c)=\E(X) \times \E(Y)$ cannot be the graph of any homomorphism from $\E(X)$ to $\E(Y)$. Thus, any map $c:X \to Y$ \emph{cannot} be a strong $\E$-map.
Secondly, since $\E(X)$ is not a trivial group, there cannot exist a homomorphism $\phi_c:\E(Y) \to \E(X)$ such that $\E(X) \times \E(Y) = \E(c) = \{(\psi_c ([h_Y]), [h_Y]) \, \, | \, [h_Y] \in \E(Y) \}$. Therefore any constant map $f:X \to Y$ \emph{cannot} be a strong co-$\E$-map.
\end{proof}
\begin{rem} In the case of $\E$-map and co $\E$-map, the statements similar to the above Proposition \ref{prop-e-maps} is the following (see \cite{LY}):A continuous map $f:X \to Y$ is an $\E$-map if and only if the projection map $p_1:\E(f) \to \E(X)$ has a section $s:\E(X) \to \E(f)$ as groups. Similarly, a continuous map $f:X \to Y$ is a co-$\E$-map if and only if the projection map $p_2:\E(f) \to \E(Y)$ has a section $s:\E(Y) \to \E(f)$ as groups. 
\end{rem}
\section{$\E$ is a Lax functor}
Even if we define such $\E$-maps and strong $\E$-maps or co $\E$-maps and strong co-$\E$-maps, the collection of such maps do not in general make a category and also even if they make a category, the assignment $\E$ cannot be in general captured as a functor. However, if we consider correspondences as morphisms, then the assignment $\E$ can be captured as \emph{a Lax functor} (e.g., see \cite[Chapter 4]{JY})  as follows.

First we recall the notion of a correspondence of groups.

\begin{defi} Let $G$ and $F$ be two groups and $S$ be a subgroup of the product $G \times F$. Then the following diagram
\begin{equation}\label{corr}
G \xleftarrow {p_1} S \xrightarrow {p_2} F
\end{equation}
is called a \emph{correspondence} of groups. Here $p_1:S \to G$ and $p_2:S \to F$ are the projections to each factor. 
\end{defi}
The correspondence (\ref{corr}) is interchangeably expressed as $S: G \rightsquigarrow F$ (or $S:G \vdash F$).
\begin{rem} 
\begin{enumerate}
\item Given a homomorphism $f:G \to F$, then its graph $\Gamma_f:=\{(g, f(g)) \in G \times F \}$ gives us a correspondence $G \leftarrow \Gamma_f \rightarrow F$, which is a \emph{graph correspondence}.
\item If $S$ is not necessarily a subgroup of $G \times F$, but an arbitrary group and $g:S \to G$ and $f:S \to F$ are group homomorphisms, then the diagram $G \xleftarrow {g} S \xrightarrow {f} F$
is called a \emph{span} of groups.
\end{enumerate}
\end{rem}
\begin{lem}\label{compo} (composition of correspondences) 
Let $G \xleftarrow {p_1} S \xrightarrow {p_2} F$ and $F \xleftarrow {p_1} S' \xrightarrow {p_2} K$ be two correspondences; $S: G \rightsquigarrow F$ and $S':F \rightsquigarrow K$. Then we define the following composition 
\begin{equation}\label{composit}
(F \xleftarrow {p_1} S' \xrightarrow {p_2} K) \copyright (G \xleftarrow {p_1} S \xrightarrow {p_2} F):= G \xleftarrow {p_1} S' \copyright S \xrightarrow {p_2} K, \qquad S' \copyright  S: G \rightsquigarrow K,
\end{equation}
where
\begin{equation*}
S \copyright S':=\{(g, k) \in G \times K \, \, | \, \, \exists f \in F \, \, \text{such that $(g,f) \in S$ and $(f,k) \in S'$} \}.
\end{equation*}
$S \copyright S'$ is a subgroup of $G \times K$, thus (\ref{composit}) is well-defined.

(Note the order of $S$ and $S'$ in $S' \copyright S$, since a correspondence is considered as a kind of ``map".)
\end{lem}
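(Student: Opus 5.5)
To prove that $S' \copyright S$ is a subgroup of $G \times K$ (and hence that (\ref{composit}) is a correspondence), I would check the subgroup axioms directly from the defining condition, using only that $S \subset G \times F$ and $S' \subset F \times K$ are subgroups. Write $e_G, e_F, e_K$ for the identities.

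\emph{Identity.} Since $S$ and $S'$ are subgroups, $(e_G, e_F) \in S$ and $(e_F, e_K) \in S'$. Taking the witness $f = e_F$ gives $(e_G, e_K) \in S' \copyright S$.

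\emph{Closure under products.} Let $(g_1,k_1), (g_2,k_2) \in S' \copyright S$ with witnesses $f_1, f_2 \in F$, so that $(g_i,f_i) \in S$ and $(f_i,k_i) \in S'$. The group operations in $G \times F$ and $F \times K$ are componentwise. Hence $(g_1g_2, f_1f_2) = (g_1,f_1)(g_2,f_2) \in S$ and $(f_1f_2, k_1k_2) \in S'$. The element $f_1f_2$ therefore witnesses $(g_1g_2,k_1k_2) \in S' \copyright S$.

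\emph{Closure under inverses.} If $f$ witnesses $(g,k)$, then $(g^{-1},f^{-1}) = (g,f)^{-1} \in S$ and $(f^{-1},k^{-1}) \in S'$. So $f^{-1}$ witnesses $(g^{-1},k^{-1})$.

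Alternatively, one could combine closure and inverses into the single check $(g_1,k_1)(g_2,k_2)^{-1}$ with witness $f_1f_2^{-1}$.

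The only point needing care, rather than any real obstacle, is that the witness for a product is the product of the witnesses in the \emph{same} order. This is exactly what componentwise multiplication provides, so noncommutativity of $F$ causes no trouble.

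Finally, I would note that $p_1, p_2$ are the restrictions of the projections $G \times K \to G$ and $G \times K \to K$, and are therefore homomorphisms. This completes well-definedness.
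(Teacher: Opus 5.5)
Your proof is correct and matches the paper's argument step for step. The paper also checks the subgroup axioms directly: $(e_G,e_K)$ lies in $S' \copyright S$ with witness $e_F$, products are closed with witness $f_1f_2$, and inverses are closed with witness $f^{-1}$.
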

\begin{proof} 
\begin{enumerate}
\item Let $e_G$, $e_F$ and $e_G$ be the identities of $G$, $F$ and $K$, respectively. Since $S$ and $S'$ are subgroups of $G \times F$ and $F \times K$ respectively, we do have that $(e_G, e_F) \in S$ and $(e_F, e_K) \in S'$. Hence we have $(e_G, e_K) \in S' \copyright S$ by the definition of $S' \copyright S$. Hence $S' \copyright S \not = \emptyset.$
\item Let $(g_1, k_1), (g_2, k_2) \in S' \copyright S$. Hence, by the definition of $S' \copyright S$, for each $i =1, 2$ there exist $f_i \in F$ such that $(g_i, f_i) \in S$ and $(f_i, k_i) \in S'$. Since $S$ and $S'$ are both subgroups, we have 
$$\text{$(g_1g_2, f_1f_2) \in S$ and $(f_1f_2, k_1k_2) \in S'$},$$
which implies that $(g_1g_2, k_1k_2) \in S' \copyright S$ by the definition.
\item Let $(g,k) \in S' \copyright S$. Thus it follows from the definition of $S' \copyright S$ there exists $f \in F$ such that $(g,f) \in S$ and $(f,k) \in S'$. Since $S$ and $S'$ are groups, there exist their inverse elements, i.e., $(g,f)^{-1}=(g^{-1}, f^{-1}) \in S$ and $(f,k)^{-1}=(f^{-1}, k^{-1}) \in S'$, respectively.Therefore $(g^{-1}, k^{-1}) \in S' \copyright S$ by the definition. Namely the inverse $(g,k)^{-1}=(g^{-1}, k^{-1}) \in S' \copyright S$.
\end{enumerate}
\end{proof}
\begin{rem} Usually the composition of correspondences (or in general spans or roofs) is defined by the fiber product as follows:
\begin{equation*}
\xymatrix@!0{
&& \hspace{3cm} S \times _F S' \, (\subset G \times F \times F \times K) \ar[dl] \ar[dr]  &&& \\
& S \ar[dl] \ar[dr] && S' \ar[dl] \ar[dr] &\\
G  && F && K
}
\end{equation*}
$$S \times_F S':=\{((g,f), (f, k)) \in S \times S' \, \, | \, \, \exists f \in F \, \, \text{such that $(g,f) \in S$ and $(f,k) \in S'$} \}.$$
Hence, if we consider the projection map $p_1 \times p_4: G \times F \times F \times K \to G \times K$, then our $S' \copyright S$ is nothing but the image $(p_1 \times p_4)(S \times_F S') \subset G \times K.$ In this sense our $S' \copyright S$ is in a sense induced via ``fiber product". So, abusing words, our $S' \copyright S$ is defined by ``fiber-duct". Mimicking the above fiber product diagram, we depict it as follows:
\begin{equation*}
\xymatrix@!0{
&& S' \copyright S \ar@{~}[dl] \ar@{~}[dr]  &&& \\
& S \ar[dl] \ar[dr] && S' \ar[dl] \ar[dr]  &\\
G  && F && K
}
\end{equation*}
The main reason why we use this ``fiber-duct" procedure is that the usual composition (by fiber product) of correspondences \emph{does not} gives rise to a correspondence, in other words $S \times_F S'$ is not necessarily isomorphic to a subgroup of $G \times K$, i.e., the resulting object is what is so called \emph{a span}.  For example let us consider the following simple cases: $G \leftarrow G \times F \rightarrow$ and $F \leftarrow F \times K \rightarrow K$, i.e., we have the following fiber product:
\begin{equation*}
\xymatrix@!0{
&& (G \times F) \times _F (F \times K) \ar[dl] \ar[dr]  &&& \\
& G \times F \ar[dl] \ar[dr] && F \times K \ar[dl] \ar[dr] &\\
G  && F && K
}
\end{equation*}
Then $(G \times F) \times _F (F \times K)= G \times \Delta(F) \times K \cong G \times F \times K$, which is not a subgroup of $G \times K$. However, ``fiber-duct", i.e., deleting the middle components $\Delta(F) \cong F$ in the above fiber product $G \times \Delta(F) \times K \cong G \times F \times K$, gives us the following:
\begin{equation*}
\xymatrix@!0{
&& G \times K \ar@{~}[dl] \ar@{~}[dr]  &&& \\
& G \times F \ar[dl] \ar[dr] && F \times K\ar[dl] \ar[dr]  &\\
G  && F && K.
}
\end{equation*}
\end{rem}

\begin{thm}\label{st-2-cat} (A strict $2$-category of correspondences of groups)  Let $\mathscr Gr$ be the category of groups and let $\op{Corr}_{\mathscr Gr}$ be the category of correspondences of groups. Then $\op{Corr}_{\mathscr Gr}$ becomes a strict $2$-category, to be more precise an enriched category over a poset-category:
\begin{itemize}
\item ($0$-object) $\op{Obj}(\op{Corr}_{\mathscr Gr}) = \op{Obj}(\mathscr Gr)$, i.e., the objects are all the groups,
\item ($1$-morphism) For two groups $G, F$, $\op{Corr}_{\mathscr Gr}(G, F)$ consists of correspondences from $G$ to $F$,
\item ($2$-morphism) For two correspondences $G \xleftarrow {p_1} S_1 \xrightarrow {p_2} F$, $G \xleftarrow {p_1} S_2 \xrightarrow {p_2} F$, i.e.,  $S_1:G \rightsquigarrow F, S_2:G \rightsquigarrow F$, a $2$-morphism $\alpha: S_1 \Rightarrow S_2$ is defined by $S_1 \subset S_2$:
$$
\xymatrix@!0
 {& S_1 \ar[dl]_{p_1} \ar[dr]^{p_2} \ar@{=>}[dd]^\alpha & \\
  G & & F \\
  & S_2 \ar[ul]^{p_1} \ar[ur]_{p_2} &
 }
$$
\end{itemize}
\end{thm}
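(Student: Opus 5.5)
We verify the axioms of a category enriched over posets (equivalently, a locally posetal strict $2$-category). There are four ingredients: hom-posets, identity $1$-morphisms, associativity and unitality of the composition $\copyright$ of Lemma \ref{compo}, and monotonicity of $\copyright$ in each variable. In a locally posetal $2$-category all $2$-cells are unique when they exist. Hence vertical and horizontal composition of $2$-morphisms, as well as the interchange law, are automatic once $\copyright$ is shown to preserve inclusions. That monotonicity is what makes $\copyright$ a functor of poset categories $\op{Corr}_{\mathscr Gr}(F,K)\times \op{Corr}_{\mathscr Gr}(G,F)\to \op{Corr}_{\mathscr Gr}(G,K)$.

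\emph{Hom-posets.} For each pair $G,F$, the set $\op{Corr}_{\mathscr Gr}(G,F)$ of subgroups of $G\times F$ is a poset under $\subset$. By the discussion in the introduction it is therefore a poset category. Vertical composition of $2$-morphisms is transitivity of inclusion, and the identity $2$-morphism is $S\subset S$.

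\emph{Identities.} The identity $1$-morphism on $G$ will be the diagonal $\Delta(G)=\Gamma_{\op{id}_G}\subset G\times G$. For $S\subset G\times F$ we check directly that
$$S\copyright \Delta(G)=\{(g,f)\mid \exists g'\in G,\ (g,g')\in\Delta(G),\ (g',f)\in S\}=S,$$
and similarly $\Delta(F)\copyright S=S$. In each case the witness is forced to equal the adjacent coordinate.

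\emph{Associativity.} Take $S:G\rightsquigarrow F$, $S':F\rightsquigarrow K$ and $S'':K\rightsquigarrow L$. Unwinding the definition of Lemma \ref{compo}, both $S''\copyright(S'\copyright S)$ and $(S''\copyright S')\copyright S$ equal
$$\{(g,l)\mid \exists f\in F,\ k\in K:\ (g,f)\in S,\ (f,k)\in S',\ (k,l)\in S''\}.$$
This holds because existential quantifiers commute. It is the familiar associativity of relational composition; Lemma \ref{compo} already guarantees that each side is a subgroup. Since the $1$-morphisms compose strictly associatively and unitally as sets, the resulting $2$-category is strict rather than merely a bicategory.

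\emph{Horizontal composition of $2$-morphisms.} If $S_1\subset S_2$ in $\op{Corr}_{\mathscr Gr}(G,F)$ and $S_1'\subset S_2'$ in $\op{Corr}_{\mathscr Gr}(F,K)$, then any witness $f$ for $(g,k)\in S_1'\copyright S_1$ also witnesses $(g,k)\in S_2'\copyright S_2$. Hence $S_1'\copyright S_1\subset S_2'\copyright S_2$, and this inclusion defines the horizontal composite. Functoriality of horizontal composition and the interchange law hold trivially, since any two parallel $2$-morphisms coincide.

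I expect no genuine obstacle here; the argument is entirely bookkeeping. The only point needing care is the order convention in $S'\copyright S$: the set-builder in Lemma \ref{compo} is written as $S\copyright S'$, and the associativity computation must track it consistently.
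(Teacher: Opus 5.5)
Your proposal is correct and follows essentially the same route as the paper. Both arguments take the hom-posets of subgroups under inclusion and use the diagonal $\Delta(G)$ as the identity $1$-morphism. Both obtain associativity and the unit laws by unwinding the existential quantifiers in the fiber-duct composition, and both get horizontal composition from monotonicity of that composition. The only difference is that you dispatch the remaining coherence axioms (identity $2$-cells, associativity of vertical and horizontal composition, interchange) in one stroke via local posetality. The paper checks each of these separately, using the same reasoning.
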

\begin{proof} It is clear that $\op{Corr}_{\mathscr Gr}(G, F)$ is a small category, because it consists of $S: G \rightsquigarrow F$ for all subgroups $S$ of $F \times G$, thus it is a set. We show that these small categories $\op{Corr}_{\mathscr Gr}(G, F)$'s satisfy the following (for the notion of 2-category, e.g., see \cite{Tate}):
\begin{enumerate}
\item For any group $G$ and the identity map $\op{id}_G:G \to G$, its graph $\Gamma_{\op{id}_G}=$ is denoted by $1_G:=\Delta (G) :=\{(g, g) \, \, | \, g \in G\}$ gives us the graph correspondence $G \leftarrow 1_G \rightarrow G$. So, this is the identity $1$-morphism.
\item For any groups $G,F,K$ we have the above composition given in Definition \ref{compo}
$$\copyright : \op{Corr}_{\mathscr Gr}(F, K) \times \op{Corr}_{\mathscr Gr}(G, F) \to \op{Corr}_{\mathscr Gr}(G, K)$$  
and we have the following commutative diagrams:
\begin{enumerate}
\item for $E, G, F, K$, (for the sake of space, $\op{Corr}_{\mathscr Gr}$ is simply denoted by $\op{Corr}$.)
$$
\xymatrix
{ \op{Corr}(F, K) \times \op{Corr}(G, F)  \times \op{Corr}(E, G)  \ar[d]_{\op{id}_{\op{Corr}(F, K)} \times - \copyright -} \ar[rrr]^{\qquad - \copyright- \times \op{id}_{\op{Corr}(E, G) }} &&& \op{Corr}(G, K) \times \op{Corr}(E, G)  \ar[d]^{ - \copyright - } \\
 \op{Corr}(F, K) \times \op{Corr}(E, F)  \ar[rrr]_{ - \copyright - } &&& \op{Corr}(E, K)  
}
$$
The composition $(- \copyright -) \circ (- \copyright - \times \op{id}_{\op{Corr}(E, G) })$ is due to the following composition of fiber products:
$$
\xymatrix@!0{
&&& S_3 \copyright (S_2 \copyright S_1) \ar@{~}[dl] \ar@{~}[ddrr] \\
&& S_2 \copyright S_1 \ar@{~}[dl] \ar@{~}[dr]  &&& \\
& S_1 \ar[dl] \ar[dr] && S_2 \ar[dl] \ar[dr] && S_3 \ar[dl] \ar[dr] &  \\
E && G  && F && K
}
$$
The composition $(- \copyright -) \circ (\op{id}_{\op{Corr}(F, K)} \times - \copyright-)$ is due to the following composition of fiber products:
$$
\xymatrix@!0{
&&&  (S_3 \copyright S_2) \copyright S_1  \ar@{~}[ddll] \ar@{~}[dr]  \\
&&&& S_3 \copyright S_2 \ar@{~}[dl] \ar@{~}[dr] \\
& S_1 \ar[dl] \ar[dr] && S_2 \ar[dl] \ar[dr] && S_3 \ar[dl] \ar[dr] &  \\
E && G  && F && K
}
$$
Then we have 
$$ S_3 \copyright (S_2 \copyright S_1) =   (S_3 \copyright S_2) \copyright S_1,$$
which means the \emph{associativity of the composition of $1$-morphisms}. Because of this equality this $2$-category is called a \emph{strict $2$-category}\footnote{The composition of $1$-morphisms is \emph{only associative up to a (coherent) $2$-morphism (called the \emph{associator})}, namely  $S_3 \copyright (S_2 \copyright S_1) \cong (S_3 \copyright S_2) \copyright S_1$, then it is called just \emph{$2$-category} or \emph{bicategory}.}.

Indeed we can see this as follows:
\begin{align*}
(e, k)  \in S_3 \copyright (S_2 \copyright S_1) 
& \Longleftrightarrow \text{$\exists f \in F$ such that $(e,f) \in S_2 \copyright S_1$ and $(f,k) \in S_3$} \\
& \Longleftrightarrow \text{$\exists g \in G$, $\exists f \in F$ such that $(e,g) \in S_1$, $(g,f) \in S_2$ and $(f,k) \in S_3$} \\
& \Longleftrightarrow \text{$\exists g \in G$ such that $(e,g) \in S_1$, $(g,k) \in S_3 \copyright S_2$} \\
& \Longleftrightarrow (e, k) \in (S_3 \copyright S_2) \copyright S_1.
\end{align*}
\item We have the following commutative diagram:
$$
\xymatrix
{\op{Corr}_{\mathscr Gr}(G, F) \ar@/^30pt/[rr]^{- \copyright 1_G} \ar@/_30pt/[rr]_{1_F \copyright -} \ar[rr]^{\op{id}_{\op{Corr}_{\mathscr Gr}(G, F)}} && \op{Corr}_{\mathscr Gr}(G, F)
}
$$
Namely, for $G \leftarrow S \rightarrow F \in \op{Corr}_{\mathscr Gr}(G, F)$, $G \leftarrow 1_G \rightarrow G$ and $F \leftarrow 1_F \rightarrow F$, we have 
$$S \copyright 1_G = 1_F \copyright S = S.$$
Indeed, $(g,f) \in S$ can be trivially expressed as follows: $\exists g \in G$ such that $(g,g) \in 1_G=\Delta(G)$ and $(g,f) \in S$, hence we have $S \copyright 1_G=S$. Similarly, $(g,f) \in S$ can be trivially expressed as follows: $\exists f \in F$ such that $(g,f) \in S$ and $(f,f) \in 1_F=\Delta(F)$, hence we have $1_F \copyright S = S$. (In other words, the identity $1$-morphism is also the identity of $1$-morphisms.)
\end{enumerate}
\item (identity $2$-morphism) For a $1$-morphism, i.e, a correspondence $G \leftarrow S \rightarrow F$, the identity $2$-morphism $\op{id}_S$ is given by $\op{id}_S$:
$$
\xymatrix@!0
 {& S \ar[dl]  \ar[dr] \ar@{=>}[dd]^{\op{id}_S} &\\
  G & & F \\
  & S \ar[ul] \ar[ur] &
 }
$$
\item (vertical composition of $2$-morphism) For two groups $G, F$, the category $\op{Corr}_{\mathscr Gr}(G,F)$ is a poset-category, i.e., a poset by the order $(G \leftarrow S_1 \rightarrow F) \leq (G \leftarrow S_2 \rightarrow F)$ (or simply denoted by $S_1 \leq S_2$) defined by $S_1 \subset S_2$, and categorically, this is denoted by the double arrow $\Longrightarrow$ (2-morphism) as follows:
$$
\xymatrix@!0
 {& S_1 \ar[dl]  \ar[dr] \ar@{=>}[dd]^\alpha &\\
  G & & F \\
  & S_2 \ar[ul] \ar[ur] &
 }
$$
Therefore 
$$
\op{Corr}_{\mathscr Gr}(G,F)(S_1, S_2) = \begin{cases} 
\{ S_1 \Longrightarrow S_2\} & \quad \text{if $S_1 \subset S_2$}\\
 \emptyset & \quad \text{if $S_1 \not \subset S_2$}\\
\end{cases}
$$
For 2-morphisms $S_1 \overset{\alpha}{\Longrightarrow} S_2$ and $S_2 \overset{\beta}{\Longrightarrow} S_3$, i.e., $S_1 \subset S_2$ and $S_2 \subset S_3$, we have the composition $S_1 \overset{\beta \circ \alpha}{\Longrightarrow} S_3$, i.e., $S_1 \subset S_3$. 
This is depicted as follows:
$$
\xymatrix@!0
 {& S_1 \ar[dl]  \ar[dr] \ar@{=>}[d]^{\alpha} &\\
  G & S_2 \ar@{=>}[d]^{\beta} \ar[l] \ar[r] &   F  \\
  & S_3 \ar[ul] \ar[ur] &
  } \qquad \mapsto \qquad 
\xymatrix@!0
 {& S_1 \ar[dl]  \ar[dr] \ar@{=>}[dd]^{\beta \circ \alpha} &\\
  G & & F \\
  & S_2 \ar[ul] \ar[ur] &
 }
$$
The associativity of the vertical compositions of $2$-morphisms is clear. It is because $S_1 \overset{\alpha}{\Longrightarrow} S_2 \overset{\beta}{\Longrightarrow} S_3 \overset{\gamma}{\Longrightarrow} S_4$, i.e., $S_1 \subset S_2 \subset S_3 \subset S_4$, clearly implies that $(\gamma \circ \beta) \circ \alpha = \gamma \circ (\beta \circ \alpha)$\footnote{$(\gamma \circ \beta) \circ \alpha$ means that $S_1 \subset S_2$ and $S_2 \subset S_4$ implies $S_1 \subset S_4$, on the other hand $\gamma \circ (\beta \circ \alpha)$ means that 
$S_1 \subset S_3$ and $S_3 \subset S_4$ implies $S_1 \subset S_4$. Thus they are the same.}. 
\item (horizontal composition of $2$-morphisms)\label{hor-comp}
$$
\xymatrix@!0
 {& S_1 \ar[dl]  \ar[dr] \ar@{=>}[dd]^\alpha && S'_1 \ar[dl]  \ar[dr] \ar@{=>}[dd]^\beta \\
  G & & F & & K  \\
  & S_2 \ar[ul] \ar[ur] && S'_2 \ar[ul] \ar[ur]
 }
 \qquad \mapsto \qquad 
 \xymatrix@!0
 {&  S'_1 \copyright S_1 \ar[dl]  \ar[dr] \ar@{=>}[dd]^{\beta \copyright \alpha} & \\
  G & & K \\
  & S'_2 \copyright S_2  \ar[ul] \ar[ur] &
 }
$$
This means that $S_1 \subset S_2$ and $S'_1 \subset S'_2$ imply $S'_1 \copyright S_1 \subset S'_2 \copyright S_2$.
This can be seen as follows:
\begin{align*}
(g,k) \in S'_1 & \copyright S_1 \Longleftrightarrow \text{$\exists f \in F$ such that $(g,f) \in S_1$ and $(f,k) \in S'_1$}\\
& \qquad \Longrightarrow \text{$\exists f \in F$ such that $(g,f) \in S_2$ and $(f,k) \in S'_2$}\\
&  \hspace{3cm} \text{(since $S_1 \subset S_2$ and $S'_1 \subset S'_2$)}\\
& \qquad \Longrightarrow (g,k) \in S'_2 \copyright S_2.
\end{align*}
Therefore we have that $S'_1 \copyright S_1 \subset S'_2 \copyright S_2$.
\item (the horizontal composition of the identity $2$-morphisms)
$$
\xymatrix@!0
 {& S \ar[dl]  \ar[dr] \ar@{=>}[dd]^{\op{id}_S} && S' \ar[dl]  \ar[dr] \ar@{=>}[dd]^{\op{id}_{S'}} \\
  G & & F & & K  \\
  & S \ar[ul] \ar[ur] && S' \ar[ul] \ar[ur]
 }
 \qquad \mapsto \qquad 
 \xymatrix@!0
 {&  S' \copyright S \ar[dl]  \ar[drr] \ar@{=>}[dd]^{\op{id}_{S'} \copyright \op{id}_S} & \\
  G & & & K \\
  & S' \copyright S  \ar[ul] \ar[urr] &
 }
$$
It follows from the proof of the above (\ref{hor-comp}) that $\op{id}_{S'} \copyright \op{id}_S= \op{id}_{S' \copyright S}$.
\item (the associativity of the horizontal composition of $2$-morphisms):
$$
\xymatrix@!0
 {& S_1 \ar[dl]  \ar[dr] \ar@{=>}[dd]^\alpha && S'_1 \ar[dl]  \ar[dr] \ar@{=>}[dd]^\beta && S''_1 \ar[dl]  \ar[dr] \ar@{=>}[dd]^\gamma \\
  G & & F & & K & & M \\
  & S_2 \ar[ul] \ar[ur] && S'_2 \ar[ul] \ar[ur] && S''_2 \ar[ul] \ar[ur]
 }
\quad  \mapsto \quad 
 \xymatrix@!0
 {&  S''_1 \copyright (S'_1 \copyright S_1) \ar[dl]  \ar[drr] \ar@{=>}[dd]^{\gamma \copyright (\beta \copyright \alpha)} & \\
  G & & & M \\
  & S''_2 \copyright (S'_2 \copyright S_2)  \ar[ul] \ar[urr] &
 }
 \quad 
 \xymatrix@!0
 {&  (S''_1 \copyright S'_1) \copyright S_1 \ar[dl]  \ar[drr] \ar@{=>}[dd]^{(\gamma \copyright \beta) \copyright \alpha} & \\
  G & & & M \\
  & (S''_2 \copyright S'_2) \copyright S_2  \ar[ul] \ar[urr] &
 }
$$
Since $S''_1 \copyright (S'_1 \copyright S_1)= (S''_1 \copyright S'_1) \copyright S_1$ and $S''_2 \copyright (S'_2\copyright S_2)= (S''_2 \copyright S'_2) \copyright S_2$ as shown above and the $2$-morphisms are the inclusions, thus we see that $\gamma \copyright (\beta \copyright \alpha) = (\gamma \copyright \beta) \copyright \alpha.$
\item (the interchange of horizontal and vertical compositions of $2$-morphisms)
$$
\xymatrix@!0
 {& S_1 \ar[dl]  \ar[dr] \ar@{=>}[d]^{\alpha} &&  S'_1 \ar[dl]  \ar[dr] \ar@{=>}[d]^{\alpha'} \\
  G & S_2 \ar@{=>}[d]^{\beta} \ar[l] \ar[r] & F  & S'_2 \ar@{=>}[d]^{\beta'} \ar[l] \ar[r] & K\\
  & S_3 \ar[ul] \ar[ur] &&  S'_3 \ar[ul] \ar[ur] 
  } \mapsto 
\xymatrix@!0
 {
 & S_1 \ar[dl]  \ar[dr] \ar@{=>}[dd]^{\beta \circ \alpha} && S'_1 \ar[dl]  \ar[drr] \ar@{=>}[dd]^{\beta' \circ \alpha'}\\
  G & & F &&& K\\
  & S_3 \ar[ul] \ar[ur] && S'_3 \ar[ul] \ar[urr] 
 }
 \mapsto 
 \xymatrix@!0
 {
 & S'_1 \copyright S_1 \ar[dl]  \ar[drrr] \ar@{=>}[dd]^{(\beta' \circ \alpha') \copyright (\beta \circ \alpha)} &&& \\
  G & & & &  K\\
  & S'_3 \copyright S_3 \ar[ul] \ar[urrr] 
 }
$$
$$
\xymatrix@!0
 {& S_1 \ar[dl]  \ar[dr] \ar@{=>}[d]^{\alpha} &&  S'_1 \ar[dl]  \ar[dr] \ar@{=>}[d]^{\alpha'} \\
  G & S_2 \ar@{=>}[d]^{\beta} \ar[l] \ar[r] & F  & S'_2 \ar@{=>}[d]^{\beta'} \ar[l] \ar[r] & K\\
  & S_3 \ar[ul] \ar[ur] &&  S'_3 \ar[ul] \ar[ur] 
  }  \mapsto \quad 
\xymatrix@!0
 { & & S'_1 \copyright S_1 \ar[dll]  \ar[drr] \ar@{=>}[d]^{\alpha'\copyright \alpha} &\\
  G && S'_2 \copyright S_2 \ar@{=>}[d]^{\beta' \copyright \beta } \ar[ll] \ar[rr] &&   K  \\
  && S'_3 \copyright S_3 \ar[ull] \ar[urr] &
 }
 \quad \mapsto 
 \xymatrix@!0
 {
 & S'_1 \copyright S_1 \ar[dl]  \ar[drrr] \ar@{=>}[dd]^{(\beta' \circ \beta) \copyright (\alpha' \circ \alpha)} &&& \\
  G & & & &  K\\
  & S'_3 \copyright S_3 \ar[ul] \ar[urrr] 
 }
$$
Then by the same argument above we have that $(\beta' \circ \alpha') \copyright (\beta \circ \alpha) = (\beta' \circ \beta) \copyright (\alpha' \circ \alpha)$.
\end{enumerate}
\end{proof}

\begin{thm}\label{lax-fun} The assignment $\E$ is a Lax functor from the category $\mathscr Top$ of topological spaces and continuous maps to the strict $2$-category $\op{Corr}_{\mathscr Gr}$ of correspondences of groups. Namely, 
\begin{enumerate}
\item for a topological space $X$, we define $\E(X)$, 
\item for a continuous map $f:X \to Y$, we have a correspondence of groups
$$\text{$\E(X) \xleftarrow {p_1}  \E(f) \xrightarrow {p_2} \E(Y)$ \quad  or  \quad $\E(f): \E(X) \rightsquigarrow \E(Y)$},$$
\item for continupus maps $f:X \to Y$ and $g:Y \to Z$, we have a 2-morphism 
$$\text{$\E(g) \copyright \E(f) \Longrightarrow \E(g \circ f)$ \quad or \quad 
 $\E(g) \copyright \E(f) \subset \E(g \circ f)$}.$$
\end{enumerate}
\end{thm}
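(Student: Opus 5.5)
Items (1) and (2) need only a short check. $\E(X)$ is a group by Section 2. For a map $f:X\to Y$, Definition \ref{E(f)} makes $\E(f)$ a subset of $\E(X)\times\E(Y)$, and the argument that it is a subgroup is routine. It is the stabilizer of $[f]$ under the right-left action $\mathscr{RL}$, so it is closed under products and inverses. Explicitly, if $f h_X\simeq h_Y f$ and $f h'_X\simeq h'_Y f$, then $f h_X h'_X\simeq h_Y f h'_X\simeq h_Y h'_Y f$. For inverses, precompose $f h_X\simeq h_Y f$ with $h_X^{-1}$ and postcompose with $h_Y^{-1}$, which gives $h_Y^{-1} f\simeq f h_X^{-1}$. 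Hence $\E(X)\xleftarrow{p_1}\E(f)\xrightarrow{p_2}\E(Y)$ is a correspondence in the sense of the definition preceding Lemma \ref{compo}, i.e. a $1$-morphism of $\op{Corr}_{\mathscr Gr}$.

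The substantive part is (3), the inclusion $\E(g)\copyright\E(f)\subset\E(g\circ f)$. Take $([h_X],[h_Z])\in\E(g)\copyright\E(f)$. By the definition in Lemma \ref{compo}, there is some $[h_Y]\in\E(Y)$ with $([h_X],[h_Y])\in\E(f)$ and $([h_Y],[h_Z])\in\E(g)$. This means $f\circ h_X\simeq h_Y\circ f$ and $g\circ h_Y\simeq h_Z\circ g$. Because homotopy is compatible with composition on either side, these paste:
\[
(g\circ f)\circ h_X = g\circ(f\circ h_X)\simeq g\circ(h_Y\circ f)=(g\circ h_Y)\circ f\simeq (h_Z\circ g)\circ f = h_Z\circ(g\circ f).
\]
Therefore $([h_X],[h_Z])\in\E(g\circ f)$. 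Since $2$-morphisms in $\op{Corr}_{\mathscr Gr}$ are exactly inclusions (Theorem \ref{st-2-cat}), this gives the required $2$-morphism $\E(g)\copyright\E(f)\Rightarrow\E(g\circ f)$.

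Finally I would check the remaining lax-functor data. The unit comparison is an equality: by the Remark after Definition \ref{E(f)}, $\E(\op{id}_X)=\Delta(\E(X))=1_{\E(X)}$, so the unitor is the identity $2$-morphism. The coherence conditions (associativity and unit compatibility of the comparison $2$-cells) hold automatically. Each hom-category $\op{Corr}_{\mathscr Gr}(G,F)$ is a poset, so any two parallel $2$-morphisms coincide, and every required diagram of $2$-cells commutes as soon as its source and target agree. Their agreement follows from the strict associativity and unit laws proved in Theorem \ref{st-2-cat}.

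No step is a genuine obstacle. The only point needing care is the pasting of homotopies in (3): we need $g\circ H$ to be a homotopy from $g f h_X$ to $g h_Y f$, and $H'\circ(f\times\op{id}_{[0,1]})$ to be a homotopy from $g h_Y f$ to $h_Z g f$, after which transitivity of $\simeq$ finishes the chain. The reverse inclusion fails in general, which is why we obtain only a lax functor.
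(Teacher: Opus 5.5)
Your proposal is correct and follows the paper's proof. The core step is the same: you paste the homotopy-commutative squares for $f$ and $g$ through an intermediate $[h_Y]$ to get $\E(g)\copyright\E(f)\subset\E(g\circ f)$. You also spell out details the paper leaves implicit: that $\E(f)$ is a subgroup, that the unit constraint is the identity because $\E(\op{id}_X)=\Delta(\E(X))$, and that the coherence conditions hold automatically because each hom-category is a poset.
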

\begin{proof} For a continuous map $f:X \to Y$ and $g: Y \to Z$, we have the following correspondence of groups
$$\E(X) \xleftarrow {p_1} \E(f) \xrightarrow {p_2} \E(Y), \qquad \E(Y) \xleftarrow {p_1} \E(g) \xrightarrow {p_2} \E(Z).$$
It suffices to show $\E(g) \copyright \E(f) \Longrightarrow \E(g \circ f)$. We have
$$\E(f) =\{ ([h_X], [h_Y] ) \in \E(X) \times \E(Y) \, \, | \, \, [f][h_X] = [h_Y][f] \},$$
namely we have the following homotopy commutative diagram
\begin{equation*}\label{e(f)-gh}
\xymatrix
{ X \ar[r]^{h_X} \ar[d]_f & X \ar[d]^f \\
Y \ar[r]_{h_Y} & Y.
}
\end{equation*}
Similarly we have 
$$\E(g) =\{ ([h_Y], [h_Z] ) \in \E(Y) \times \E(Z) \, \, | \, \, [g][h_Y] = [h_Z][g] \},$$
namely we have the following homotopy commutative diagram
\begin{equation*}\label{e(f)-gh}
\xymatrix
{ Y \ar[r]^{h_Y} \ar[d]_g & Y \ar[d]^g \\
Z \ar[r]_{h_Z} & Z.
}
\end{equation*}
Then we have
\begin{align*}
\E(g) \copyright \E(f) =\{ ([h_X], [h_Z] ) & \in \E(X) \times \E(Z) \, \, |  \\
& \, \, \text{$\exists [h_Y] \in \E(Y)$ 
such that $([h_X], [h_Y]) \in \E(f)$ and $([h_Y], [h_Z) \in \E(g)$} \}.
\end{align*}
Therefore for any element $([h_X], [h_Z]) \in \E(g) \copyright \E(f)$, we have the following homotopy commutative diagram:
\begin{equation*}
\xymatrix
{ X \ar[r]^{h_X} \ar[d]_f & X \ar[d]^f \\
Y \ar[r]_{h_Y} \ar[d]_g & Y \ar[d]^g\\
Z \ar[r]_{h_Z} & Z,
}
\end{equation*}
which implies the following homotopy commutative diagram:
\begin{equation*}\label{e(f)-gh}
\xymatrix
{ X \ar[r]^{h_X} \ar[d]_{g \circ f}  & X \ar[d]^{g \circ f} \\
Z \ar[r]_{h_Z} & Z.
}
\end{equation*}
Namely we have $([h_X], [h_Z]) \in \E(g \circ f)$. Therefore we get $\E(g) \copyright \E(f)  \subset \E(g \circ f)$, i.e, 
$$\E(g) \copyright \E(f)  \Longrightarrow \E(g \circ f).$$
\end{proof}
\begin{prop} Let $\mathscr Top$ be the category of topological spaces and continuous maps and $\op{Corr}_{\mathscr Top}$ be a 2-strict category of correspondences of topological spaces (called topological correspondences), i.e., a $1$-morphism from $X$ to $Y$ is a correspondence $X \xleftarrow {p_1} S \xrightarrow {p_2} Y$ such that $S$ is a subspace of $X \times Y$ and $p_1, p_2$ are projections restricted to the subspace $S$. A $2$-morphism $\alpha: S_1 \to S_2$ for two correspondences $X \xleftarrow {p_1} S_1 \xrightarrow {p_2} Y$ and $X \xleftarrow {p_1} S_2 \xrightarrow {p_2} Y$ is defined by the inclusion $S_1 \hookrightarrow S_2$. The composition of two topological correspondences is defined in the same way as in the above strict $2$-category of correspondences of groups. Let us define $\Gamma: \mathscr Top \to \op{Corr}_{\mathscr Top}$ as follows:
\begin{enumerate}
\item For a topological space $X$, $\Gamma (X) =X$,
\item For a continuous map $f: X \to Y$, we define $\Gamma(f)$ as the graph correspondence of $f$, i.e.,
$$X \xleftarrow {p_1} \Gamma_f \xrightarrow {p_2} Y$$
where $\Gamma_f:=\{(x, f(x)) \in X \times Y \, \, | \, \, x \in X \}.$
\end{enumerate}
Then $\Gamma: \mathscr Top \to \op{Corr}_{\mathscr Top}$ is a strict functor, i.e., for two continuous maps $f:X \to Y$ and $g:Y \to Z$ we have
$$\Gamma (g) \copyright \Gamma(f) = \Gamma (g \circ f).$$
\end{prop}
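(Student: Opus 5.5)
The plan is to verify the two inclusions $\Gamma(g) \copyright \Gamma(f) \subset \Gamma(g\circ f)$ and $\Gamma(g\circ f) \subset \Gamma(g) \copyright \Gamma(f)$ directly from the set-theoretic definition of the ``fiber-duct'' composition. That definition carries over verbatim from Lemma \ref{compo}: for $S \subset X \times Y$ and $S' \subset Y \times Z$,
$$S' \copyright S = \{(x,z) \in X \times Z \,\, | \,\, \exists y \in Y \text{ such that } (x,y) \in S \text{ and } (y,z) \in S'\}.$$
This set inherits the subspace topology from $X \times Z$. We should note at the outset that $\Gamma(g)\copyright \Gamma(f)$ and $\Gamma(g\circ f)$ are both subspaces of the same product $X \times Z$. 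Hence equality as sets automatically gives equality as topological correspondences, with the same restricted projections. No topological argument is needed beyond this remark.

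For the inclusion $\subset$, take $(x,z) \in \Gamma(g)\copyright\Gamma(f)$. There is then some $y \in Y$ with $(x,y) \in \Gamma_f$ and $(y,z) \in \Gamma_g$, that is, $y = f(x)$ and $z = g(y)$. Hence $z = g(f(x)) = (g\circ f)(x)$, so $(x,z) \in \Gamma_{g\circ f}$.

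For the inclusion $\supset$, take $(x,(g\circ f)(x)) \in \Gamma_{g\circ f}$ and choose the witness $y := f(x)$. Then $(x,f(x)) \in \Gamma_f$ and $(f(x), g(f(x))) \in \Gamma_g$, so $(x,(g\circ f)(x)) \in \Gamma(g)\copyright\Gamma(f)$. This inclusion is the only point where the witness $y$ must actually be produced. Unlike the homotopy situation in Theorem \ref{lax-fun}, the witness here exists because $f$ is a genuine function, so every $x$ has a well-defined image $f(x)$.

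To call $\Gamma$ a strict functor, we should also check identities: $\Gamma(\op{id}_X) = \Delta(X)$, and this is the identity $1$-morphism $1_X$. The unit laws $S \copyright 1_X = S = 1_Y \copyright S$ hold by the same argument given in the proof of Theorem \ref{st-2-cat}, and the associativity computation there also carries over word for word. I expect no real obstacle. The only point needing care is the bookkeeping of the order of composition, $\Gamma(g)\copyright\Gamma(f)$ rather than the reverse, and making sure the witness $y$ in the definition of $\copyright$ is placed in the middle factor $Y$.
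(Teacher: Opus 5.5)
Your proposal is correct and is essentially the paper's argument: the paper runs the same element chase as one chain of equivalences, namely $(x,z)\in\Gamma_g\copyright\Gamma_f$ iff there is $y$ with $y=f(x)$ and $z=g(y)$, iff $z=(g\circ f)(x)$. Your two-inclusion split with the explicit witness $y=f(x)$ spells out both directions of that chain, and your added check that $\Gamma(\op{id}_X)=\Delta(X)=1_X$ is a harmless step the paper leaves implicit.
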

\begin{proof} It suffices to show that $\Gamma_g \copyright \Gamma_f = \Gamma_{g \circ f}$.
\begin{align*}
(x, z)  \in \Gamma_g \copyright \Gamma_f & \Longleftarrow \text{$\exists y \in Y$ such that $(x,y) \in \Gamma_f$ and $(y,z) \in \Gamma_g$} \\
& \Longleftrightarrow \text{$y=f(x)$ and $z = g(y)$ for $x \in X$} \\
& \Longleftrightarrow \text{$z = g(f(x)) =(g \circ f)(x)$ for $x \in X$} \\
& \Longleftrightarrow (x, z) \in \Gamma_{g \circ f} \\
\end{align*}
\end{proof}
\begin{thm}\label{top-corr} The above Lax functor $\E: \mathscr Top \to \op{Corr}_{\mathscr Gp}$ is factored through the above strict $2$-category $\op{Corr}_{\mathscr Top}$
$$
\xymatrix{
\mathscr Top \ar[dr]_{\Gamma} \ar[rr]^{\E} && \op{Corr}_{\mathscr Gp} \\
& \op{Corr}_{\mathscr Top} \ar[ur]_{\widetilde \E}
}
$$
Namely, we have  $\E= \widetilde \E \circ \Gamma$. 
Here $\widetilde \E:\op{Corr}_{\mathscr Top} \to \op{Corr}_{\mathscr Gp}$ is defined by
\begin{itemize}
\item $\widetilde \E (X) = \E(X)$ for a topological space $X$,
\item $\widetilde \E(X \xleftarrow {p_1} S \xrightarrow {p_2} Y):= \E(X) \leftarrow \E(p_2) \copyright \E(p_1)^t \rightarrow \E(Y)$
for a topological correspondence $X \xleftarrow {p_1} S \xrightarrow {p_2} Y$.
Here we consider the following correspondences of groups:
\begin{equation*}
\xymatrix@!0{
&& \E(p_2) \copyright \E(p_1)^t \ar@{~}[dl] \ar@{~}[dr] &&&  \\
& \E(p_1)^t \ar[dl] \ar[dr] && \E(p_2) \ar[dl] \ar[dr] &\\
\E(X)  && \E(S) && \E(Y)
}
\end{equation*}
Note that for $p_1:S \to X$ we have the correspondence $\E(S) \leftarrow \E(p_1) \rightarrow \E(X)$ and we take its transpose $\E(X) \leftarrow \E(p_1)^t \rightarrow \E(S)$. 
\end{itemize}
\end{thm}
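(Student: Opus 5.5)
The claim $\E = \widetilde\E \circ \Gamma$ has to be checked on objects and on morphisms. On objects it is immediate: $\Gamma(X)=X$ and $\widetilde\E(X)=\E(X)$.

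For a continuous map $f:X\to Y$ we must show
$$\E(p_2)\copyright\E(p_1)^t=\E(f)\subset \E(X)\times\E(Y),$$
where $X \xleftarrow{p_1} \Gamma_f \xrightarrow{p_2} Y$ is the graph correspondence. The key observation is that $p_1:\Gamma_f\to X$ is a homeomorphism, with inverse $s:=(\op{id}_X,f):X\to\Gamma_f$, and that $p_2=f\circ p_1$. We will use these two facts throughout.

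We first describe $\E(p_1)$. Let $([h_S],[h_X])\in\E(p_1)$, so that $p_1 h_S\simeq h_X p_1$. Composing with $s$ on both sides gives $[h_S]=[s][h_X][p_1]$. Hence $\E(p_1)$ is the graph of the conjugation isomorphism $\E(X)\to\E(\Gamma_f)$, $[h_X]\mapsto [s h_X p_1]$. Transposing, $\E(p_1)^t$ is the graph of this isomorphism viewed as a map out of $\E(X)$. So $p_1$ is a strong co-$\E$-map in the sense of Proposition \ref{prop-e-maps}.

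Now we unwind the composition. By definition, $([h_X],[h_Y])\in \E(p_2)\copyright\E(p_1)^t$ if and only if there is $[h_S]$ with $([h_X],[h_S])\in\E(p_1)^t$ and $([h_S],[h_Y])\in\E(p_2)$. By the previous step, $[h_S]$ is forced to be $[s h_X p_1]$. The remaining condition is then
$$p_2 s h_X p_1\simeq h_Y p_2 .$$
Since $p_2 s=f$ and $p_2=f p_1$, this reads $f h_X p_1\simeq h_Y f p_1$. Composing on the right with $s$ (equivalently, using that $p_1$ is a homotopy equivalence, so precomposition with $p_1$ is injective on homotopy classes) turns this into $f h_X\simeq h_Y f$. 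That is precisely the condition $([h_X],[h_Y])\in\E(f)$, and every step above is reversible, which gives the desired equality on morphisms.

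The only delicate point is the bookkeeping: keeping track of the transpose and of which side one composes $s$ or $p_1$ on. Conceptually, $\E(p_1)^t$ acts as an isomorphism, and the "fiber-duct" composition with a graph of an isomorphism simply transports $\E(p_2)$ along it.

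Finally, if the statement is also meant to say that the lax structure 2-morphisms agree, there is nothing further to verify. The 2-morphisms in $\op{Corr}_{\mathscr Gp}$ are inclusions, so they are unique whenever they exist. Moreover, $\Gamma$ is strict, so $\widetilde\E(\Gamma(g)\copyright\Gamma(f))=\widetilde\E(\Gamma(g\circ f))=\E(g\circ f)$, and the inclusion $\E(g)\copyright\E(f)\subset\E(g\circ f)$ from Theorem \ref{lax-fun} is therefore reproduced automatically.
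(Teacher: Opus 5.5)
Your proposal is correct and follows essentially the same route as the paper: describe $\E(p_1)^t$ as the graph of conjugation by the homeomorphism $p_1\colon\Gamma_f\to X$ (with inverse $s=(\op{id}_X,f)$), then use $p_2=f\circ p_1$ and $p_2\circ s=f$ to reduce membership in $\E(p_2)\copyright\E(p_1)^t$ to $f\circ h_X\simeq h_Y\circ f$; the paper proves this as two separate inclusions with diagrams, while you write it as one chain of reversible equivalences. One harmless bookkeeping slip: $\E(p_1)\subset\E(\Gamma_f)\times\E(X)$ is the \emph{transposed} graph $\{([s h_X p_1],[h_X])\}$ of that isomorphism, but your description of $\E(p_1)^t$, which is what the argument actually uses, is right.
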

\begin{proof} 
We want to show that $\E(f) = \widetilde \E (\Gamma (f))$, i.e., $\E(f) = \widetilde \E (\Gamma_f)$ for the graph correspondence $X \xleftarrow {p_1} \Gamma_f \xrightarrow {p_2} Y$ of a continuous map $f:X \to Y$. First we note that $p_1: \Gamma_f \xrightarrow {\cong} X$ is a homeomorphism defined by $p_1(x, f(x)) = x$ and its inverse map $p_1^{-1}$ is $\op{id_X} \times f: X \to \Gamma_f$ defined by $(\op{id_X} \times f)(x) :=(x, f(x))$. $p_2: \Gamma_f \to Y$ is defined by $p_2(x, f(x)) = f(x)$. Thus $f \circ p_1 = p_2$, namely the following diagram commutes:
\begin{equation}\label{triangle}
\xymatrix@!0{
& \Gamma_f \ar[dl]_{p_1}^{\cong} \ar[dr]^{p_2} & \\
X \ar[rr]_f && Y 
}
\end{equation}
First we note that since $p_1:\Gamma_f \to X$ is a homeomorphism, for any $[h_X] \in \E(X)$ we have $[h_{\Gamma_f}]:=[p_1^{-1}][h_X] [p_1] \in \E(\Gamma_f)$ and conversely for any $[h_{\Gamma_f}] \in \E(\Gamma_f)$ we have $[h_X]:=[p_1][h_{\Gamma_f}][p_1]^{-1}$.
Thus we have the following homotopy commutative diagram:
$$
\xymatrix{
\Gamma_f \ar[d]_{p_1}^{\cong}  \ar[rr]^{h_{\Gamma_f}} && \Gamma_f \ar[d]^{p_1}_{\cong} \\
X \ar[rr]_{h_X} && X.
}
$$
Therefore we have
$$\E(p_1) = \{ ([h_{\Gamma_f}], [h_X]) \in \E(\Gamma_f) \times \E(X) \, \, | \, \, [p_1][h_{\Gamma_f}]= [h_X][p_1] \}.$$
Hence we have
\begin{align*}
\E(p_1)^t & = \{ ([h_X],[h_{\Gamma_f}] ) \in \E(X) \times \E(\Gamma_f)  \, \, | \, \, [p_1][h_{\Gamma_f}]= [h_X][p_1] \}\\
& = \{ ([h_X], [p_1]^{-1}[h_X][p_1]) \, \, | \, \, [h_X] \in \E(X) \}
\end{align*}
We also note that we have the following isomorphisms
\begin{align*}
\pi_1 &:\E(p_1)^t \xrightarrow {\cong} \E(X), \quad \pi_1 \left ([h_X], [p_1]^{-1}[h_X][p_1] \right ) = [h_X],\\
\pi_2 & :\E(p_1)^t \xrightarrow {\cong}  \E(\Gamma_f), \quad \pi_2 \left ([h_X], [p_1]^{-1}[h_X][p_1] \right ) = [p_1]^{-1}[h_X][p_1],\\
\pi_2 \circ \pi_1^{-1} &: \E(X) \xrightarrow {\cong}  \E(\Gamma_f), \quad \pi_1 \circ \pi_2^{-1}([h_X]):= [p_1]^{-1}[h_X][p_1].
\end{align*}
For the graph correspondence $X \xleftarrow {p_1} \Gamma_f \xrightarrow {p_2} Y$, we have that $\widetilde \E (\Gamma (f)) =  \E(p_2) \copyright \E(p_1)^t$, where we use the following diagram:
\begin{equation*}
\xymatrix@!0{
&& \E(p_2) \copyright \E(p_1)^t \ar@{~}[dl] \ar@{~}[dr] &&&  \\
& \E(p_1)^t \ar[dl] \ar[dr] && \E(p_2) \ar[dl] \ar[dr] &\\
\E(X)  && \E(\Gamma_f) && \E(Y)
}
\end{equation*}
\begin{align*}
& ([h_X], [h_Y]) \in \E(p_2) \copyright \E(p_1)^t \\
& \Longleftrightarrow
 \text{$\exists [h_{\Gamma_f}]=[p_1]^{-1}[h_X][p_1] \in \E(\Gamma_f)$ such that $([h_X], [h_{\Gamma_f}]) \in \E(p_1)^t$ and $([h_{\Gamma_f}], [h_Y]) \in \E(p_2)$.}\\
 \end{align*}
 Here we have the following commutative diagram:
 $$
 \xymatrix{
 X \ar@<0.8ex>[d]^{p_1^{-1}} \ar[rr]^{h_X} && X \ar@<0.8ex>[d]^{p_1^{-1}} \\
 \Gamma_f \ar@<1ex>[u]^{p_1} \ar[d]_{p_2}  \ar[rr]_{h_{\Gamma_f}} && \Gamma_f \ar@<1ex>[u]^{p_1} \ar[d]^{p_2}\\
 Y \ar[rr]_{h_Y} && Y
 }
 $$
 It follows from the commutative triangle (\ref{triangle}) we have that $p_2\circ p_1^{-1}=f:X \to Y$, thus we get the following (homotopy) commutative diagram
 $$
 \xymatrix{
 X \ar[d]_f \ar[rr]^{h_X} && X \ar[d]^f\\
 Y \ar[rr]_{h_Y} && Y
 }
 $$
 Hence we have $([h_X], [h_Y]) \in \E(f)$, thus $\E(p_2) \copyright \E(p_1)^t \subset \E(f)$.
 
 Conversely let $([h_X], [h_Y]) \in \E(f)$. We let $[h_{\Gamma_f}]:=[p_1]^{-1}[h_X][p_1]$ and let us consider the following commutative diagram (note that $f \circ p_1 =p_2$ (\ref{triangle})):
$$\xymatrix{
\Gamma_f  \ar@/_25pt/[dd]_{p_2} \ar[d]_{p_1}^{\cong} \ar[rr]^{h_{\Gamma_f}} && \Gamma_f \ar[d]^{p_1}_{\cong}  \ar@/^25pt/[dd]^{p_2} \\
X \ar[d]_f \ar[rr]_{h_X}  && X \ar[d]^f \\
 Y \ar[rr]_{h_Y} && Y
}
$$
Hence we have that 
$$\text{$\exists [h_{\Gamma_f}] \in \E(\Gamma_f)$ such that  $([h_X], [h_{\Gamma_f}]) \in \E(p_1)^t$ and $([h_{\Gamma_f}],[h_Y]) \in \E(p_2)$}.$$
Therefore we have that $([h_X], [h_Y]) \in \E(p_2) \copyright \E(p_1)^t.$
\end{proof}
\begin{cor} Let $\op{Corr}_{\mathscr Top}^{\Gamma}$ be the subcategory of $\op{Corr}_{\mathscr Top}$ such that its $1$-morphisms are the topological graph correspondences. Then $\widetilde {\E}: \op{Corr}_{\mathscr Top}^{\Gamma} \to \op{Corr}_{\mathscr Gp}$ is a Lax functor, i.e., 
$$\widetilde {\E}(\Gamma_g) \copyright \widetilde {\E}(\Gamma_f) \Longrightarrow \widetilde {\E}(\Gamma_g \copyright \Gamma_f).$$
\end{cor}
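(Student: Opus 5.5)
The corollary should follow by transporting Theorem \ref{lax-fun} along the factorization $\E=\widetilde\E\circ\Gamma$ of Theorem \ref{top-corr}. No new homotopy-theoretic input is needed; the work is bookkeeping on graph correspondences.

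\emph{Step 1: identify the terms.} Take continuous maps $f:X\to Y$ and $g:Y\to Z$, with graph correspondences $\Gamma_f$ and $\Gamma_g$. By Theorem \ref{top-corr} (proved above for an arbitrary continuous map), $\widetilde\E(\Gamma_f)=\E(f)$ and $\widetilde\E(\Gamma_g)=\E(g)$ as subgroups of $\E(X)\times\E(Y)$ and $\E(Y)\times\E(Z)$. Hence the left-hand side is
$$\widetilde\E(\Gamma_g)\copyright\widetilde\E(\Gamma_f)=\E(g)\copyright\E(f).$$

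\emph{Step 2: the right-hand side.} By the preceding Proposition, $\Gamma$ is a strict functor, so $\Gamma_g\copyright\Gamma_f=\Gamma_{g\circ f}$. This also shows that composites of graph correspondences are again graph correspondences, so $\op{Corr}^{\Gamma}_{\mathscr Top}$ is closed under $\copyright$ and is a genuine subcategory. Applying Theorem \ref{top-corr} to $g\circ f$ gives
$$\widetilde\E(\Gamma_g\copyright\Gamma_f)=\widetilde\E(\Gamma_{g\circ f})=\E(g\circ f).$$

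\emph{Step 3: conclude.} Theorem \ref{lax-fun} gives $\E(g)\copyright\E(f)\subset\E(g\circ f)$, which is exactly the required $2$-morphism. The remaining Lax-functor data is the unit comparison. Since $\Gamma_{\op{id}_X}=\Delta(X)$, Theorem \ref{top-corr} gives $\widetilde\E(\Delta(X))=\E(\op{id}_X)=\Delta(\E(X))=1_{\E(X)}$, so the unit comparison is an identity. The coherence axioms hold automatically, because every hom-category of $\op{Corr}_{\mathscr Gr}$ is a poset, so any two parallel $2$-morphisms are equal.

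\emph{Main obstacle.} The only subtle point is well-definedness. A graph correspondence $S\subset X\times Y$ could a priori arise as $\Gamma_f$ for different maps; this is ruled out because $f$ is recovered from $\Gamma_f$ by $f=p_2\circ p_1^{-1}$. One also needs Theorem \ref{top-corr} for arbitrary continuous maps, including the composite $g\circ f$, and its proof indeed uses only the homeomorphism $p_1:\Gamma_f\cong X$.
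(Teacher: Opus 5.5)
Your proposal is correct and follows the paper's own argument: you identify $\widetilde\E(\Gamma_f)=\E(f)$, $\widetilde\E(\Gamma_g)=\E(g)$ and $\widetilde\E(\Gamma_g\copyright\Gamma_f)=\widetilde\E(\Gamma_{g\circ f})=\E(g\circ f)$ via Theorem~\ref{top-corr} and the strictness of $\Gamma$, and then apply Theorem~\ref{lax-fun}. Your extra checks are correct and fill in what the paper leaves implicit: closure of $\op{Corr}^{\Gamma}_{\mathscr Top}$ under $\copyright$, the unit constraint $\widetilde\E(\Delta(X))=\Delta(\E(X))$, and automatic coherence in poset hom-categories; also, where the paper's displayed chain has the typo $\E(f)\copyright\E(f)$, your $\E(g)\copyright\E(f)$ is what is meant.
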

\begin{proof}
$$\widetilde {\E}(\Gamma_g) \copyright \widetilde {\E}(\Gamma_f)  = \E(f) \copyright \E(f) \Longrightarrow \E(g \circ f) = \widetilde {\E} (\Gamma_{g \circ f}) = \widetilde {\E}(\Gamma_g \copyright \Gamma_f).$$
\end{proof} 
\begin{rem}
It remains to see whether or not $\widetilde {\E}: \op{Corr}_{\mathscr Top} \to \op{Corr}_{\mathscr Gr}$ is also a Lax funcor, i.e. the above Lax functor 
$\widetilde {\E}: \op{Corr}_{\mathscr Top}^{\Gamma} \to \op{Corr}_{\mathscr Gp}$ can be extended to the whole category $\op{Corr}_{\mathscr Top}$.
\end{rem}
Motivated by the proof of Theorem \ref{top-corr}, we can show the following.
\begin{cor} Let $\mathscr Top^{\mathscr{HE}}$ be the subcategory of $\mathscr Top$ consisting of homotopy equivalences. For a topological space $X$ we define $\E_*(X) := \E(X)$ and for a homotopy equivalence $h:X \to Y$, $\E_*(h):=\op{Ad}_{[h]}: \E(X) \to \E(Y)$ by, for $[h_X] \in \E(X)$
$$\op{Ad}_{[h]}([h_X]):= [h][h_X][h]^{-1},$$
where we consider the following homotopy commutative diagram
\begin{equation}\label{hhhh}
\xymatrix{
X \ar[d]_h \ar[rr]^{h_X} && X \ar[d]^h \\
Y \ar[rr]_{h \circ h_X \circ h^{-1}} && Y.
}
\end{equation}
Then the following diagrams commute:
$$
\xymatrix
{\mathscr Top^{\mathscr{HE}} \ar[d]_{\Gamma} \ar[drr]_{\E} \ar[rr]^{\E_*} && \mathscr Gr \ar[d]^{\Gamma}\\
\op{Corr}_{\mathscr Top} \ar[rr]_{\widetilde \E} && \op{Corr}_{\mathscr Gr}.
}
$$
Here $\Gamma: \mathscr Gr \to \op{Corr}_{\mathscr Gr}$ is the $\mathscr Gr$-version of the topological one $\Gamma: \mathscr Top \to \op{Corr}_{\mathscr Top}.$
\end{cor}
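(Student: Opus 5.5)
The statement asserts two things. First, $\E_*$ is a genuine (covariant) functor $\mathscr Top^{\mathscr{HE}} \to \mathscr Gr$. Second, the square and the triangle commute on objects and on $1$-morphisms. For the triangle this means $\Gamma(\E_*(h)) = \E(h)$, i.e.\ $\Gamma_{\op{Ad}_{[h]}} = \E(h)$. For the square it means $\widetilde\E(\Gamma_h) = \Gamma_{\op{Ad}_{[h]}}$. On objects everything is $X \mapsto \E(X)$, so only the morphism level needs work. I would prove it in three steps: functoriality of $\E_*$, the key identity $\E(h) = \Gamma_{\op{Ad}_{[h]}}$, and then the square via Theorem \ref{top-corr}.

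\textbf{Functoriality.} Since $[h][h_X][h]^{-1}$ has inverse $[h][h_X]^{-1}[h]^{-1}$, it lies in $\E(Y)$. The map $\op{Ad}_{[h]}$ is a homomorphism because
$$[h][h_1][h]^{-1}[h][h_2][h]^{-1} = [h][h_1][h_2][h]^{-1}.$$
For $h:X\to Y$ and $k:Y\to Z$ homotopy equivalences, $[k\circ h]^{-1}=[h]^{-1}[k]^{-1}$, which gives $\op{Ad}_{[k\circ h]} = \op{Ad}_{[k]}\circ \op{Ad}_{[h]}$. Also $\op{Ad}_{[\op{id}_X]} = \op{id}_{\E(X)}$. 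Finally, $\Gamma:\mathscr Gr \to \op{Corr}_{\mathscr Gr}$ is a strict functor by the same computation as in the preceding Proposition, namely $\Gamma_\psi \copyright \Gamma_\phi = \Gamma_{\psi\circ\phi}$.

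\textbf{Key step: $\E(h) = \Gamma_{\op{Ad}_{[h]}}$.} This is the main point, and it is where invertibility of $[h]$ is used.
\begin{itemize}
\item $\Gamma_{\op{Ad}_{[h]}} \subset \E(h)$: the pair $([h_X], [h][h_X][h]^{-1})$ satisfies $[h][h_X] = ([h][h_X][h]^{-1})[h]$. This is exactly the homotopy commutativity of (\ref{hhhh}).
\item $\E(h) \subset \Gamma_{\op{Ad}_{[h]}}$: if $([h_X],[h_Y]) \in \E(h)$, then $[h][h_X] = [h_Y][h]$. Right-multiplying by $[h]^{-1}$ in the monoid $[Y,Y]$ (with the obvious composition across $[X,Y]$ and $[Y,X]$) gives $[h_Y] = [h][h_X][h]^{-1} = \op{Ad}_{[h]}([h_X])$.
\end{itemize}
So $\E(h)$ is a graph, and $h$ is a strong $\E$-map in the sense of Proposition \ref{prop-e-maps}. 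This gives commutativity of the triangle $\E = \Gamma\circ \E_*$. It also shows that, on $\mathscr Top^{\mathscr{HE}}$, the Lax comparison $\E(k)\copyright\E(h) \subset \E(k\circ h)$ is an equality.

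\textbf{The square.} Theorem \ref{top-corr} gives $\widetilde\E(\Gamma_h) = \E(h)$ for every continuous $h$, hence in particular for homotopy equivalences. Combining this with the key step yields
$$\widetilde\E\circ\Gamma(h) = \E(h) = \Gamma(\E_*(h)),$$
so the square commutes. The only delicate point I anticipate is bookkeeping about which $\Gamma$ (topological or group) and which direction of correspondence is meant. Mathematically, the cancellation argument in the key step is the heart of the proof.
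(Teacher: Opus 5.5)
Your proposal is correct and follows essentially the same route as the paper: you take the lower-left triangle $\E=\widetilde\E\circ\Gamma$ from Theorem \ref{top-corr} and reduce the upper-right triangle to the identity $\E(h)=\Gamma_{\op{Ad}_{[h]}}$. You are more careful than the paper, which only asserts this identity by pointing to the homotopy commutative square, whereas you prove both inclusions by cancelling $[h]$ against its homotopy inverse and also check that $\E_*$ is a genuine functor.
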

\begin{proof} First we note that it is not necessary to change $\op{Corr}_{\mathscr Top}$ by $\op{Corr}_{\mathscr Top^{\mathscr {HE}}}$. Since the lower left triangle commutes as shown above, it suffices to show the commutativity of the upper right triangle, i.e., $\Gamma \circ \E_* = \E$. Clearly by the definition, for a topological space $X$ we have $\left (\Gamma \circ \E_* \right) (X) =\E(X).$ For a homotopy equivalence $h:X \to Y$ we have that 
$$\E(h):= \{([h_X], [h_Y]) \in \E(X) \times \E(Y) \, \, | \, \, [h][h_X][h]^{-1}=[h_Y] \}$$
for which see (\ref{hhhh}). Thus, since $[h_Y]= \E_*(h)([h_X])= \op{Ad}_{[h]}([h_X])=[h][h_X][h]^{-1}$, we have
$$\E(h) = \Gamma_{\E_*(h)} = \{([h_X], \E_*(h)([h_X])) \, \, | \, \, [h_X] \in \E(X)\}.$$
Therefore we have that $(\Gamma \circ \E_*)(h) = \E(h)$.
\end{proof}
\section{On $\E$ of fibrations}
In this section we consider maps $f:X \to Y$ and $g:Y \to Z$ for which the Lax functor $\E$ becomes ``strict", i.e., it satisfies 
$$\E(g) \copyright \E(f) = \E(g \circ f).$$
So, we give the following name to such a pair:
\begin{defi} 
If  $\E(g) \copyright \E(f) = \E(g \circ f)$ holds, then $(f,g)$ is called \emph{an $\E$-strict-functor pair}. 
\end{defi}
\begin{prop}\label{str}
If $\xi :X \overset{f} \hookrightarrow Y \xrightarrow {g} Z$ is a trivial fibration, i.e., $Y\simeq X\times Z$, and $Z$ is path connected,  then $(f,g)$ is an $\E$-strict-functor pair.
\end{prop}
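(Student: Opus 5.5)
The plan is to reduce the equality to the constant-map case of Lemma \ref{ccc}, and then produce an explicit intermediate self-equivalence of $Y$ as a product map.

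\emph{Setup.} By Theorem \ref{lax-fun} we already have $\E(g) \copyright \E(f) \subset \E(g \circ f)$, so only the reverse inclusion needs proof. I read the hypothesis ``$\xi$ is a trivial fibration'' as follows. There is a homotopy equivalence $\varphi : Y \to X \times Z$, with homotopy inverse $\psi$, such that $\varphi \circ f \simeq i$ and $\mathrm{pr}_Z \circ \varphi \simeq g$. Here $i(x) = (x, z_0)$ is the fibre inclusion over a base point $z_0 \in Z$, and $\mathrm{pr}_Z$ is the projection. Then $g \circ f \simeq \mathrm{pr}_Z \circ \varphi \circ \psi \circ i \simeq \mathrm{pr}_Z \circ i$, which is the constant map at $z_0$. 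Since $Z$ is path connected, Lemma \ref{ccc-1} gives
$$\E(g \circ f) = \E(X) \times \E(Z).$$
It therefore suffices to show that every pair $([h_X],[h_Z]) \in \E(X) \times \E(Z)$ lies in $\E(g) \copyright \E(f)$.

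\emph{Construction.} Given such a pair, define
$$h_Y := \psi \circ (h_X \times h_Z) \circ \varphi : Y \to Y.$$
The map $h_X \times h_Z$ is a homotopy self-equivalence of $X \times Z$, with homotopy inverse the product of the homotopy inverses of $h_X$ and $h_Z$. Conjugating by $\varphi$ and $\psi$ then gives $[h_Y] \in \E(Y)$. Two homotopy commutativities remain to be checked.

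\emph{First square, $([h_X],[h_Y]) \in \E(f)$.} We compute
$$h_Y \circ f \simeq \psi \circ (h_X \times h_Z) \circ i, \qquad x \mapsto \psi\bigl(h_X(x), h_Z(z_0)\bigr),$$
$$f \circ h_X \simeq \psi \circ i \circ h_X, \qquad x \mapsto \psi\bigl(h_X(x), z_0\bigr).$$
Choose a path $\omega$ in $Z$ from $z_0$ to $h_Z(z_0)$, which exists because $Z$ is path connected. Then $(x,t) \mapsto \psi\bigl(h_X(x), \omega(t)\bigr)$ is a homotopy between these two maps, exactly as in the proof of Lemma \ref{ccc}.

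\emph{Second square, $([h_Y],[h_Z]) \in \E(g)$.} We compute
$$g \circ h_Y \simeq \mathrm{pr}_Z \circ \varphi \circ \psi \circ (h_X \times h_Z) \circ \varphi \simeq \mathrm{pr}_Z \circ (h_X \times h_Z) \circ \varphi = h_Z \circ \mathrm{pr}_Z \circ \varphi \simeq h_Z \circ g.$$

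\emph{Conclusion.} The element $[h_Y]$ witnesses $([h_X],[h_Z]) \in \E(g) \copyright \E(f)$, which gives the reverse inclusion and hence $\E(g) \copyright \E(f) = \E(g \circ f)$.

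\emph{Main obstacle.} The homotopy computations are routine. The step needing care is pinning down precisely what ``$Y \simeq X \times Z$'' asserts, namely that the equivalence is compatible with $f$ and $g$ up to homotopy, as I have assumed above. Without that compatibility, $g \circ f$ need not be nullhomotopic and the product construction would not interact correctly with $f$ and $g$. If the intended meaning is a literal product with $f = i$ and $g = \mathrm{pr}_Z$, one can take $\varphi = \psi = \mathrm{id}$, and the argument simplifies accordingly.
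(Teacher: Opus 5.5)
Your proof is correct and follows essentially the paper's argument. You reduce to $X \xrightarrow{i} X\times Z \xrightarrow{\mathrm{pr}_Z} Z$, get $\E(g\circ f)=\E(X)\times\E(Z)$ from the constant-map lemma, and use (the conjugate of) $h_X\times h_Z$ as the intermediate element, with the first square commuting up to homotopy via a path in $Z$ and the second commuting; the only difference is that you transport explicitly along $\varphi,\psi$, making precise the compatibility of $Y\simeq X\times Z$ with $f$ and $g$ that the paper leaves implicit when it says one may take $\xi$ to be the product fibration ``by homotopy''.
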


\begin{proof} 
We can take the fibration $\xi$ as $X \overset{i_1} \hookrightarrow  X\times Z \xrightarrow {p_2} Z$  by homotopy.
Here the inclusion $i_1:X \to X \times Z$ is defined by $i_1(x)=(x, z_0)$ for some (in fact, any) point $z_0 \in Z$ and $p_2:X \times Z \to Z$ is the projection to the second factor $Z$. We want to show that
$$\E( p_2) \copyright \E(i_1) =\E(p_2 \circ i_1).$$
Since the composition $p_2 \circ i_1:X \to Z$ is a constant map and $Z$ is path connected, it follows from Lemma \ref{ccc} that $\E(p_2 \circ i_1)=\E(X) \times \E(Z)$. Conversely we consider the following arbitrary element
$$([h_X], [h_Z]) \in \E(X) \times \E(Z) \, (= \E(p_2 \circ i_1)).$$
Then, using $h_X, h_Z$, we have the following \emph{homotopy}\footnote{Note that the second diagram is commutative, but the first diagram is not commutative, because we have that 
$$(i_1 \circ h_X)(x) = (h_X(x), z_0) \not = (h_X(x), h_Z(z_0) ) = ((h_X \times h_Z) \circ i_1)(x).$$ However the first diagram  is \emph{homotopy} commutative diagram, because $Z$ is path connected.} commutative diagrams
\begin{equation*}
\xymatrix
{ X \ar[rr]^{h_X} \ar[d]_{i_1} && X \ar[d]^{i_1}\\
X\times Z \ar[rr]_{h_X \times h_Z} && X\times Z,
}
\qquad 
\xymatrix
{ X\times Z \ar[rr]^{h_X \times h_Z} \ar[d]_{p_2} && X\times Z \ar[d]^{p_2}\\
Z \ar[rr]_{h_Z} &&  Z
}
\end{equation*}
Therefore we have  
$$([h_X], [h_X \times h_Z]) \in \E(i_1), \quad ([h_X \times h_Z], [h_Z]) \in \E(p_2),$$
which implies that $([h_X], [h_Z]) \in \E(p_2) \copyright \E(i_1).$ Therefore we have $ \E(p_2) \copyright \E(i_1) = \E(p_2 \circ i_1).$
\end{proof}
A famous theorem of John Frank Adams \cite{Adams} on Hopf invariant one is that \emph{a map $g:S^{2n-1} \to S^n$ of Hopf invariant one exists only for $n=2,4$ and $8$}, i.e., the following well-known Hopf fibrations (e.g., see \cite[Example 4.45, Example 4.46, Example 4.47]{Hatcher}): 
\begin{enumerate}
\item $S^1\overset{f}{\hookrightarrow } S^3\overset{g}{\to} S^2$,
\item $S^3\overset{f}{\hookrightarrow } S^7\overset{g}{\to} S^4$,
\item $S^7\overset{f}{\hookrightarrow } S^{15}\overset{g}{\to} S^8$.
\end{enumerate}
As to these three Hopf fibrations we can show that the first Hopf fibration $(f,g)$ \emph{is} an $\E$-strict-functor pair, but the other two Hopf fibrations $(f,g)$ are \emph{not} $\E$-strict-functor pairs. To show this, we use $\pi_i(S^n)=0$ for $i <n$ (see \cite[Corollary 4.9]{Hatcher}, which follows from the \emph{Cellular Approximation Theorem} \cite[Theorem 4.8]{Hatcher}), thus any continuous map $S^i \to S^n$ (base point preserving) is homotopic to a constant map $c:S^i \to S^n$ (i.e. $c(S^i)$ is the base point).
\begin{thm} 
\begin{enumerate}
\item For the Hopf fibration $S^1\overset{f}{\hookrightarrow } S^3\overset{g}{\to} S^2$, $(f,g)$ is an $\E$-strict-functor pair. 
\item For the Hopf fibrations $S^3\overset{f}{\hookrightarrow } S^7\overset{g}{\to} S^4$ and $S^7\overset{f}{\hookrightarrow } S^{15}\overset{g}{\to} S^8$, $(f,g)$ are \emph{not} $\E$-strict-functor pairs.
\end{enumerate}
\end{thm}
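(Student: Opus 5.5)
The plan is to compute both sides of $\E(g)\copyright\E(f)=\E(g\circ f)$ explicitly. Write the three Hopf fibrations uniformly as $S^{n-1}\overset{f}{\hookrightarrow} S^{2n-1}\xrightarrow{g} S^n$ with $n=2,4,8$.

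\textbf{Step 1: the right-hand side.} Since $f$ is the inclusion of a fiber, $g\circ f$ is constant. As $S^n$ is path connected, Lemma \ref{ccc} gives
$$\E(g\circ f)=\E(S^{n-1})\times\E(S^n)\cong\mathbb Z_2\times\mathbb Z_2 .$$

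\textbf{Step 2: reducing the left-hand side to $p_2(\E(g))$.} Because $n-1<2n-1$, we have $\pi_{n-1}(S^{2n-1})=0$, so $f$ is null-homotopic. Lemma \ref{ccc-1} then gives $\E(f)=\E(S^{n-1})\times\E(S^{2n-1})$. Unwinding the definition of $\copyright$,
$$\E(g)\copyright\E(f)=\{(a,c)\mid \exists\, b:\ (b,c)\in\E(g)\}=\E(S^{n-1})\times p_2(\E(g)).$$
Hence $(f,g)$ is an $\E$-strict-functor pair if and only if $p_2:\E(g)\to\E(S^n)$ is surjective. This holds if and only if there is $b\in\E(S^{2n-1})=\{\pm[\op{id}]\}$ with $[g][b]=[R_{S^n}][g]$, where $R_{S^n}$ is a reflection.

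Since $S^n$ is simply connected, free and based homotopy classes $S^{2n-1}\to S^n$ agree, so everything can be computed in $\pi_{2n-1}(S^n)$. Precomposition with a degree $-1$ map of $S^{2n-1}$ is group inversion, since $S^{2n-1}$ is a suspension. Therefore $[g][b]\in\{g,-g\}$.

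\textbf{Step 3: the case $n=2$.} Here $\pi_3(S^2)\cong\mathbb Z$, detected by the Hopf invariant. Since $H(R\circ g)=\deg(R)^2H(g)=H(g)$, we get $R\circ g\simeq g=g\circ\op{id}$. Thus $(\op{id},R)\in\E(g)$, so $p_2$ is surjective and equality holds.

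\textbf{Step 4: the cases $n=4,8$.} I expect this to be the main obstacle, since it requires genuine knowledge of the unstable groups rather than the Hopf invariant alone. I will use the standard composition formula for $n$ even,
$$(k\iota_n)\circ g=kg+\tbinom{k}{2}H(g)[\iota_n,\iota_n],$$
which for $k=-1$ gives $R\circ g=-g+[\iota_n,\iota_n]$. Next I will quote Toda's computations:
\begin{itemize}
\item $\pi_7(S^4)=\mathbb Z\{\nu_4\}\oplus\mathbb Z_{12}\{E\nu'\}$ with $[\iota_4,\iota_4]=\pm(2\nu_4-E\nu')$;
\item $\pi_{15}(S^8)=\mathbb Z\{\sigma_8\}\oplus\mathbb Z_{120}\{E\sigma'\}$ with $[\iota_8,\iota_8]=\pm(2\sigma_8-E\sigma')$.
\end{itemize}
Then $R\circ g$ equals $g-E\nu'$ (respectively $g-E\sigma'$), up to the sign conventions. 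This has nonzero torsion component, so it differs from both $g$ and $-g$. Consequently no $b$ exists, $p_2(\E(g))$ is trivial, and
$$\E(g)\copyright\E(f)=\E(S^{n-1})\times\{1\}\subsetneq\E(g\circ f).$$
The care needed is in tracking the sign conventions so that the torsion summand indeed survives. This is guaranteed because $E\nu'$ and $E\sigma'$ are nonzero, and the sign of $[\iota_n,\iota_n]$ does not affect that conclusion.
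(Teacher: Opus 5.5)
Your proof is correct. It has the same skeleton as the paper's:
\begin{itemize}
\item $\E(f)=\E(S^{n-1})\times\E(S^{2n-1})$ by Lemma \ref{ccc-1};
\item $\E(g\circ f)=\E(S^{n-1})\times\E(S^n)$ by Lemma \ref{ccc};
\item everything hinges on which elements of $\E(S^n)$ occur as second coordinates in $\E(g)$.
\end{itemize}
You turn the last point into a clean criterion, $\E(g)\copyright\E(f)=\E(S^{n-1})\times p_2(\E(g))$, which the paper only gestures at in its footnote to part (1).

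The real difference is how the homotopy-theoretic input is obtained. The paper cites [KO, Example 4.2] for when the square with $k^2\cdot\op{id}$ on top and $k\cdot\op{id}$ on the bottom commutes up to homotopy. From this it reads off $k=\pm 1$ for $n=2$ and $k=1$ for $n=4,8$. You derive the needed facts directly:
\begin{itemize}
\item for $n=2$, from the Hopf invariant isomorphism $\pi_3(S^2)\to\mathbb Z$ together with $H(R\circ g)=H(g)$;
\item for $n=4,8$, from the composition formula $(k\iota_n)\circ g=kg+\binom{k}{2}H(g)[\iota_n,\iota_n]$ and Toda's values of $[\iota_n,\iota_n]$, giving $R\circ g=g\mp E\nu'$ (respectively $g\mp E\sigma'$).
\end{itemize}

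Your route buys self-containedness and also completeness in part (2). As written, the paper only tests top maps of the form $k^2\cdot\op{id}$, which equals $1_{S^{2n-1}}$ for $k=\pm1$. So its conclusion $\E(g)=\{(1_{S^{2n-1}},1_{S^n})\}$ tacitly excludes the pair $(-1_{S^{2n-1}},-1_{S^n})$. Your observation that $[g][b]\in\{g,-g\}$ rules this pair out explicitly. It can also be excluded by the Hopf invariant: $H(g\circ b)=\deg(b)H(g)$ while $H(R\circ g)=H(g)$, so any $(b,c)\in\E(g)$ must have $\deg b=1$. The paper's route is shorter but rests entirely on the external reference.
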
 
\begin{proof} 
(1) $S^1\overset{f}{\hookrightarrow } S^3\overset{g}{\to} S^2$: It follows from  \cite[Example 4.2(i)]{KO} that the following diagram 
\begin{equation}\label{s3-s2}
\xymatrix
{ S^3 \ar[rr]^{k^2\cdot\op{id}_{S^3}} \ar[d]_g && S^3 \ar[d]^g\\
S^2 \ar[rr]_{k \cdot \op{id}_{S^2}} && S^2
}
\end{equation}
is homotopy commutative for any $k\in \Z$. 
For the sake of convenience, the homotopy class $[\op{id}_X]$ of the identity map $\op{id}_X$ shall be simply denoted by $1_X$. It is well-known that $\E(S^n) = \{1_{S^n}, -1_{S^n}\}$ for any $n$, so we have $\E(S^2)=\{1_{S^2}, -1_{S^2}\}$ and $\E(S^3)=\{1_{S^3}, -1_{S^3}\}$. Hence in our case $k$ can be only $1$ and $-1$ in the above diagram (\ref{s3-s2}), thus 
$$\E(g)= \{(1_{S^3}, 1_{S^2}), (1_{S^3}, -1_{S^2})\} \, \, (\subset \E(S^3) \times \E(S^2)).$$
On the other hand, since $f: S^1 \hookrightarrow S^3$ is homotopic to a constant map as remarked above, it follows from Lemma \ref{ccc-1} that we have
$$ \E(f) = \E(S^1) \times \E(S^3) =\{(1_{S^1}, 1_{S^3}), (1_{S^1}, -1_{S^3}), (-1_{S^1}, 1_{S^3}), (-1_{S^1}, -1_{S^3})\}$$
Thus we have 
\begin{align*}
\E(g)\copyright \E(f) & =\{(1_{S^1}, 1_{S^2}), (1_{S^1}, -1_{S^2}), (-1_{S^1}, 1_{S^2}), (-1_{S^1}, -1_{S^2})\} \\
& =\E(S^1)\times \E(S^2) \\
& =\E(g\circ f) \quad \text{(since $g \circ f:S^1 \to S^2$ is a constant map and use Lemma \ref{ccc})}
\end{align*}
Thus $(f,g)$ is an $\E$-strict-functor pair\footnote{Note that in the above computation of $\E(g)\copyright \E(f)$ we use only two elements $(1_{S^1}, \underline{1_{S^3}})$ and $(-1_{S^1}, \underline{1_{S^3}})$ in the above group $\E(f)$, because we have that $\E(g)= \{(\underline{1_{S^3}}, 1_{S^2}), (\underline{1_{S^3}}, -1_{S^2})\}$. (Look at the underlined $\underline{\text{elements}}$.)}. 

(2) $S^3\overset{f}{\hookrightarrow } S^7\overset{g}{\to} S^4$: It follows from  \cite[Example 4.2(ii)]{KO} that the following diagram 
\begin{equation}\label{s7-s4}
\xymatrix
{ S^7 \ar[rr]^{k^2 \cdot \op{id}_{S^7}} \ar[d]_g && S^7 \ar[d]^g\\
S^4 \ar[rr]_{k \cdot \op{id}_{S^4}} && S^4
}
\end{equation}
is homotopy commutative for any $k\in \Z$ \emph{such that $k(k-1) \equiv 0 \, (\op{mod} 8)$}. Hence, in our case it is only possible that $k = 1$ in the above diagram (\ref{s7-s4}). Thus we have
\begin{equation}\label{7-4}
\E(g) = \{(1_{S^7}, 1_{S^4})\} \, (\subset \E(S^7) \times \E(S^4)).
\end{equation}
Since $f: S^3 \to S^7$ is homotopic to a constant map, we have
$$ \E(f) = \E(S^3) \times \E(S^7) =\{(1_{S^3}, 1_{S^7}), (1_{S^3}, -1_{S^7}), (-1_{S^3}, 1_{S^7}), (-1_{S^3}, -1_{S^7})\}.$$
Hence, because of (\ref{7-4}) we get 
$$\E(g) \copyright \E(f) =\{(1_{S^3}, 1_{S^4}), (-1_{S^3}, 1_{S^4}) \},$$
which is \emph{not} equal to $\E( g \circ f) = \E(S^3) \times \E(S^4).$ Therefore this $(f,g)$ is not an $\E$-strict-functor pair.

(3) $S^7\overset{f}{\hookrightarrow } S^{15}\overset{g}{\to} S^8$: It follows from  \cite[Example 4.2(iii)]{KO} that the following diagram 
\begin{equation}\label{s15-s8}
\xymatrix
{ S^{15} \ar[rr]^{k^2 \cdot \op{id}_{S^{15}}} \ar[d]_g && S^{15} \ar[d]^g\\
S^8 \ar[rr]_{k \cdot \op{id}_{S^8}} && S^8
}
\end{equation}
is homotopy commutative for any $k\in \Z$ \emph{such that $k(k-1) \equiv 0 \, (\op{mod} 16)$}. Thus, in our case, as in the above (2), it is only possible that $k=1$ in the above diagram (\ref{s15-s8}). Thus we have
$$\E(g) = \{(1_{S^{15}}, 1_{S^8})\} (\subset \E(S^{15}) \times \E(S^8)).$$
Since $f: S^7 \to S^{15}$ is homotopic to a constant map, we have
$$ \E(f) = \E(S^7) \times \E(S^{15}) =\{(1_{S^7}, 1_{S^{15}}), (1_{S^7}, -1_{S^{15}}), (-1_{S^7}, 1_{S^{15}}), (-1_{S^7}, -1_{S^{15}})\}.$$
Hence we get
$$\E(g) \copyright \E(f) =\{(1_{S^7}, 1_{S^8}), (-1_{S^7}, 1_{S^8}) \},$$
which is \emph{not} equal to $\E( g \circ f) = \E(S^7) \times \E(S^8).$ Therefore this $(f,g)$ is not an $\E$-strict-functor pair.
\end{proof}
\begin{thm}\label{contra} Let $f:X \to Y$ and $g:Y \to Z$ be continuous maps such that $Y$ is a contractible space and $Z$ is path connected. Then $(f,g)$ is an $\E$-strict-functor pair.
\end{thm}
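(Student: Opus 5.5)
We need to show $\E(g) \copyright \E(f) = \E(g \circ f)$. The inclusion $\E(g)\copyright\E(f)\subset \E(g\circ f)$ is Theorem \ref{lax-fun}, so only the reverse inclusion has to be proved. The plan is to show that both sides equal the full product $\E(X)\times\E(Z)$.

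First, since $Y$ is contractible, $\E(Y)$ is trivial: every self-map of $Y$ is homotopic to $\op{id}_Y$, because $\op{id}_Y$ is homotopic to a constant map. Also, $g\circ f$ factors through $Y$, and $\op{id}_Y\simeq c_{y_0}$, so
$$g\circ f = g\circ \op{id}_Y\circ f \simeq g\circ c_{y_0}\circ f,$$
which is a constant map $X\to Z$. Since $Z$ is path connected, Lemma \ref{ccc-1} gives $\E(g\circ f)=\E(X)\times\E(Z)$.

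Second, we show $\E(f)=\E(X)\times \E(Y)$ and $\E(g)=\E(Y)\times\E(Z)$. For $f$: $Y$ is contractible, hence path connected, and $f\simeq c_{y_0}\circ f$, which is constant. So Lemma \ref{ccc-1} applies directly. For $g$: $g=g\circ\op{id}_Y\simeq g\circ c_{y_0}$, which is constant into the path connected space $Z$, so Lemma \ref{ccc-1} again gives the full product. Equivalently, for any $[h_Z]$ one can check $g\circ \op{id}_Y\simeq h_Z\circ g$ directly, as in the proof of Lemma \ref{ccc}.

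Third, take an arbitrary $([h_X],[h_Z])\in\E(X)\times\E(Z)$ and choose the middle element $[\op{id}_Y]$. Then $([h_X],[\op{id}_Y])\in\E(f)$ and $([\op{id}_Y],[h_Z])\in\E(g)$ by the previous step, so $([h_X],[h_Z])\in\E(g)\copyright\E(f)$. This gives the reverse inclusion, and hence the equality.

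No step is a real obstacle; the only point needing care is that we work with free (unbased) homotopies. Contractibility gives a free homotopy $\op{id}_Y\simeq c_{y_0}$, and the path connectedness hypothesis on $Z$ is exactly what Lemma \ref{ccc-1} requires. Note also that contractibility already makes $Y$ path connected, so Lemma \ref{ccc-1} is legitimately applicable to $f$.
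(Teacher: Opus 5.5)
Your proof is correct and follows the paper's argument. In both, contractibility of $Y$ makes $f$, $g$ and $g\circ f$ null-homotopic, so Lemma \ref{ccc-1} gives $\E(f)=\E(X)\times\E(Y)$ and $\E(g)=\E(Y)\times\E(Z)$, hence $\E(g)\copyright\E(f)=\E(X)\times\E(Z)\supseteq\E(g\circ f)$. The only difference is cosmetic: you take $[\op{id}_Y]$ as the middle element, whereas the paper first rewrites $\E(Y)=\{1_Y\}$.
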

\begin{proof} Since $Y$ is contractible, $Y$ is a homotopy equivalence to a point, thus $\E(Y)$ is a trivial group $\{1_Y\}$.
Since $Y$ is contractible, there exists a continuous map 
$$H: Y \times [0, 1] \to Y$$
such that $H(y,0) = y$, i.e., $H|_{Y \times \{0\}}:Y \to Y$ is the identity map and $H(y,1) = y_0$ for some point $y_0 \in Y$, i.e., $H|_{Y \times \{1\}}:Y \to Y$ is a constant map. Namely, the identity map $\op{id}_Y:Y \to Y$ is homotopic to the constant map $c:Y \to Y$ such that $c(Y)=y_0$. Then we consider the following composition of continuous maps $H$ and $f \times \op{id}_{[0,1]}$:
$$H \circ (f \times \op{id}_{[0,1]}) :X \times [0,1] \xrightarrow {f \times \op{id}_{[0,1]}} Y \times [0,1]  \xrightarrow H Y.$$
Which implies that $H \circ (f \times \op{id}_{[0,1]})(y, 0) = f(y)$, i.e., $H \circ (f \times \op{id}_{[0,1]})|_{X \times \{0\}}:X \to Y$ is equal to $f:X \to Y$ and  $H \circ (f \times \op{id}_{[0,1]})(y, 1) = y_0$, i.e., $H \circ (f \times \op{id}_{[0,1]})|_{X \times \{1\}}:X \to Y$ is equal to the constant map $d:X \to Y$ such that $d(X) =y_0$. Hence $f:X \to Y$ is homotopic to a constant map $d:X \to Y$. Since $Y$ is contractible, it is automatically path connected. Thus it follows from Lemma \ref{ccc-1} that 
\begin{equation}\label{e(f)}
\E(f) = \E(X) \times \E(Y) =\E(X) \times \{1_Y\}.
\end{equation}

Now we consider the following composition of continuous maps $H$ and $g:Y \to Z$:
$$g \circ H: Y \times [0, 1] \xrightarrow {H}  Y \xrightarrow g Z.$$
Which implies that $g=(g \circ H)|_{Y \times \{0\}}:Y \to Z$ is homotopic to the constant map $e=(g \circ H)|_{Y \times \{1\}}:Y \to Z$ such that $e(Y) =g(y_0)$. Thus, since $Z$ is path connected, it follows from Lemma \ref{ccc-1} that 
\begin{equation}\label{e(g)}
\E(g) = \E(Y) \times \E(Z) =\{1_Y \} \times \E(Z).
\end{equation}
(\ref{e(f)}) and (\ref{e(g)}) imply that 
$$\E(g) \copyright \E(f) = \E(X) \times \E(Z)$$
which implies that $\E(g) \copyright \E(f) = \E( g \circ f)$, thus $(f,g)$ is an an $\E$-strict-functor pair. It is becasue $\E(g) \copyright \E(f) \Longrightarrow \E(g \circ f)$, i.e., $\E(g) \copyright \E(f) \subset \E(g \circ f)$ always holds and by the definition $\E(g \circ f) \subset \E(X) \times \E(Z)$. Another reason is that by the above arguments we see that $g \circ f:X \to Z$ is homotopic to the constant map $e \circ d:X \to Z$ such that $(e \circ d)(X) = g(y_0)$, thus it follows from Lemma \ref{ccc-1} that $\E(g \circ f)= \E(X) \times \E(Z)$.
\end{proof}
In the above theorem contractibility of $Y$ is a key. As a corollary of this theorem we can get the following
\begin{cor} 
\begin{enumerate}
\item Let $X$ be path connected and let $\Omega X \overset {f} {\hookrightarrow} PX \xrightarrow {g} X$ be the loop-path fibration. Then $(f,g)$ is an $\E$-strict-functor pair.
\item Let $G \overset {f} {\hookrightarrow} EG \xrightarrow {g} BG$ be the universal principal G bundle for a topological group $G$. Here $BG$ is the classifying space of $G$. Then $(f,g)$ is an an $\E$-strict-functor pair.
\end{enumerate}
\end{cor}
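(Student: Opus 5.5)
Both parts will be deduced directly from Theorem \ref{contra}. That theorem needs three things: continuous maps $f:X\to Y$ and $g:Y\to Z$, a contractible middle space $Y$, and a path connected target $Z$. So for each fibration I only have to check that the total space is contractible and the base is path connected.

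For (1), the middle space is the path space $PX=\{\omega:[0,1]\to X \mid \omega(0)=x_0\}$ with the compact-open topology. The map $g$ is evaluation at $1$, and $f:\Omega X\hookrightarrow PX$ is the inclusion of loops. Both are continuous. I will check contractibility of $PX$ with the explicit homotopy
$$H:PX\times[0,1]\to PX,\qquad H(\omega,t)(s):=\omega((1-t)s).$$
At $t=0$ this is $\op{id}_{PX}$, and at $t=1$ it is the constant path at $x_0$. Each $H(\omega,t)$ still starts at $x_0$. Continuity of $H$ follows from the exponential law, since $(\omega,t,s)\mapsto \omega((1-t)s)$ is the evaluation map composed with a continuous reparametrization, and $[0,1]$ is locally compact Hausdorff. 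The target $Z=X$ is path connected by hypothesis, so Theorem \ref{contra} applies and gives $\E(g)\copyright\E(f)=\E(g\circ f)$.

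For (2), I will use Milnor's construction \cite{Milnor, Milnor2}, in which $EG$ is the infinite join $G*G*\cdots$. It is a standard part of that construction that $EG$ is contractible. I will cite this rather than reprove it; the underlying reason is that the infinite join is $n$-connected for every $n$ and has the homotopy type of a CW complex. The base $BG=EG/G$ is the image of the path connected space $EG$ under the continuous quotient map $g$, so it is path connected. Theorem \ref{contra} then gives the conclusion, with $f$ the inclusion of a fiber.

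The main obstacle is the contractibility of $EG$ in (2). For an arbitrary topological group, Milnor's $EG$ is known to be contractible (Milnor proves all its homotopy groups vanish, and with his topology it is contractible), but I would check the precise statement in \cite{Milnor2} before citing it. If only weak contractibility were available, Theorem \ref{contra} would not apply as stated, so I would cite the contractibility explicitly. Part (1) has no real difficulty: Theorem \ref{contra} asks only that $Y$ be contractible, not any basepoint-preserving property, so the explicit homotopy $H$ is enough.
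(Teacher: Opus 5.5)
Your proposal is correct and is essentially the paper's argument: the corollary is obtained directly from Theorem~\ref{contra}, using that $PX$ and $EG$ are contractible, that $X$ is path connected by hypothesis, and that $BG=EG/G$ is path connected as the continuous image of the path connected space $EG$ (exactly the paper's remark after the corollary). Your explicit contraction $H(\omega,t)(s)=\omega((1-t)s)$ of $PX$ and your decision to cite genuine contractibility of Milnor's $EG$ directly supply details the paper leaves implicit. Citing it directly is the right call, rather than arguing via Whitehead's theorem, which for a general $G$ would need independent knowledge of CW homotopy type.
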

\begin{rem} Note that the classifying space $BG \cong EG/G$ is path connected since $EG$ is contractible, thus path connected, because the quotient space of a path connected space is automatically path connected.
\end{rem}
\begin{rem} 
The case treated in the above Theorem \ref{contra} is the case when the ``fiber-duct" is the same as the ``fiber product". 
$$
\xymatrix@!0{
&& \E(g) \copyright \E(f) =\E(X) \times \E(Z) \ar@{~}[dl] \ar@{~}[dr]  &&& \\
& \E(f) \ar[dl] \ar[dr] && \E(g)  \ar[dl] \ar[dr]  &\\
\E(X)  && \E(Y) && \E(Z)
}
\qquad 
\xymatrix@!0{
&& \E(f) \times_{\E(Y)} \E(g) = \E(X) \times \E(Z) \ar[dl] \ar[dr]  &&& \\
& \E(f) \ar[dl] \ar[dr] && \E(g)  \ar[dl] \ar[dr]  &\\
\E(X)  && \E(Y) && \E(Z)
}
$$
Here is another simple example of the case when ``fiber-duct" is the same as the ``fiber product". Consider the diagonal correspondence of a group $G$, i.e., $G \leftarrow \Delta (G) = \{(g, g) \,| \, g \in G\}\rightarrow G$. Then the ``fiber-duct" $\Delta (G) \copyright \Delta (G) = \Delta (G) \cong G$ and the fiber product $\Delta (G) \times_G \Delta(G) \cong G$ are the same:
$$
\xymatrix@!0{
&& \Delta (G) \copyright \Delta (G)=G  \ar@{~}[dl] \ar@{~}[dr]  &&& \\
& \Delta (G)\ar[dl] \ar[dr] && \Delta (G) \ar[dl] \ar[dr]  &\\
G  && G && G
}
\qquad 
\xymatrix@!0{
&& \Delta (G) \times_G \Delta (G) = G  \ar[dl] \ar[dr]  &&& \\
& \Delta (G)  \ar[dl] \ar[dr] && \Delta (G)  \ar[dl] \ar[dr]  &\\
G && G && G
}
$$
\end{rem}

\vspace{1cm}

\noindent
{\bf Acknowledgements}

The second author is supported by JSPS KAKENHI Grant Number JP23K03117. \\

\end{document}